\DeclareMathAlphabet{\mathcal}{OMS}{cmsy}{m}{n} 
\definecolor{mygreen}{rgb}{0,0.6,0}
\definecolor{mygray}{rgb}{0.5,0.5,0.5}
\definecolor{mypurple}{rgb}{0.5, 0.0, 1.0}
\theoremstyle{plain}
\newtheorem{theorem}{\bf Theorem}[section]
\newtheorem{prop}[theorem]{\bf Proposition}
\newtheorem{lemma}[theorem]{\bf Lemma}
\theoremstyle{definition}
\newtheorem{definition}[theorem]{\rm Definition}
\newtheorem{remark}[theorem]{\rm Remark}
\numberwithin{equation}{section}
\crefname{theorem}{}{Theorems}
\crefname{lemma}{}{Lemmas}
\crefname{proposition}{Proposition}{Propositions}
\crefname{corollary}{Corollary}{Corollaries}
\crefname{definition}{}{Definitions}
\crefname{remark}{}{Remarks}
\crefname{example}{Example}{Examples}
\crefname{question}{Question}{Questions}
\crefname{conjecture}{Conjecture}{Conjectures}
\begin{document}
\title[On $h_0b_n\gamma_s$]{The composition of R.~Cohen's elements and the third periodic elements in stable homotopy
groups of spheres}
\author[X.~Gu, X.~Wang and J.~Wu]{Xing Gu, Xiangjun Wang and Jianqiu Wu*}

\keywords{stable homotopy groups of spheres,
Adams spectral sequence, Adams-Novikov spectral sequence, May spectral sequence, Morava stabilizer algebra.}
\subjclass[2010]{Primary 55Q45; Secondary 55T15}

\thanks{ *Corresponding author}
\thanks{ Project supported by the National Science Foundation of China (No.11871284 and No.11761072)}
\maketitle

\begin{abstract}
 In this paper, we re-compute the cohomology of the Morava stabilizer algebra $S(3)$ \cite{Ra1, Ya}. As an application,
 we show that for $p \geq 7$, if  $s\not \equiv 0, \pm 1 \,\,  mod \,(p) $, $n\not \equiv 1 \,\, mod\, 3$, $n>1$,
then  $\zeta_n\gamma_s$ is a nontrivial product in $\pi_*(S)$ by Adams-Novikov spectral sequence, where $\zeta_n$ is
created by  R. Cohen \cite{Co}, $\gamma_s$ is a third periodic homotopy elements.
\end{abstract}

\section{Introduction}
  In this paper we adapt the well-known framework of classical Adams spectral sequence, Adams-Novikov spectral sequence and chromatic spectral sequence, as described in \cite{Ra}. Fix $p$  an odd prime. Consider the corresponding Brown-Peterson spectrum $BP$, of which the coefficient ring $\pi_{*}(BP)$ is denoted by $BP_*$, and the $BP$-homology of the spectrum $BP$ is denoted by $BP_*BP$. There is a well-known Hopf algebroid structure over the pair $(BP_*,BP_*BP)$.

Let $K(n)_*$ be the coefficient ring of the $n$-th Morava K-theory, $\Sigma(n) = K(n)_* \otimes_{BP_*} BP_*BP \otimes_{BP_*} K(n)_* $, and  $S(n)=\Sigma(n)\otimes_{K(n)_*}\mathbb{Z}/p$ be the $n$-th Morava stabilizer algebra. $\Sigma(n)$ and $S(n)$ have obvious coproducts induced by that of $BP_*BP$, making them Hopf algebras.

At an odd prime $p\geqslant 5$, the cohomology of Hopf algebra $S(3)$ has been studied
by  Ravenel in \cite{Ra1}, where he gave the Poincare series
of $H^*S(3)$ and listed the generators bellow dimensional 5.
It is also studied by  Yamaguchi in \cite{Ya},  where he shown the ring structure, though there may be some misprints.

In this paper,  we redetermine the $\mathbb{Z}/p$-algebra structure of $H^{*}S(3)$, i.e., the $\mathbb{Z}/p$-algebra
$$\operatorname{Ext}_{S(3)}(\mathbb{Z}/p,\mathbb{Z}/p)$$
for $p\geq 7$ in another way, and apply this result to detect a nontrivial product in the stable homotopy groups of spheres.
It will become clear that the algebra structure, rather than the underlying $\mathbb{Z}/p$-module structure of
$H^*S(3)$, is essential to our application.

We define a May-type filtration upon $S(3)$ in such a way that $E^{*,*}(3)=\bigoplus_{M\geqslant 0}F^{*,M}S(3)/F^{*,M-1}S(3)$
becomes a primitive generated Hopf algebra. This filtration gives rise to a May spectral sequence
$\{E_r^{s,t,M}, d_r\}$ that converges to $H^*S(3)$.  A simple argument in homological algebra then determines $E_{1}^{*,*,*}$ and $d_1$.
Hence, $E_{2}^{*,*,*}$ for $H^*S(3)$ can be obtained by direct computation. Finally, a comparison of $E_{2}^{*,*,*}$ and the cobar complex
of $S(3)$ gives us the desired result. The structure of $H^*S(3)$ is rather complicated and therefore postponed to Section 3.

We apply the result above to detect a family of nontrivial elements in the homotopy group of the sphere spectrum,
each of which is the product of following two well-known elements.
To describe the first one, recall
$$ BP_*=BP_*S=\mathbb{Z}_{(p)}[v_1, v_2, \cdots],$$
where $v_i$ is the i-th Hazewinkel generator with degree $2(p^i-1)$(\cite{Ha,Ha1}\cite{Ra}). We recall the Greek letter elements in the Adams-Novikov $E_2$ page $H^*(BP_*BP)=\operatorname{Ext}_{BP_*BP}(BP_*,BP_*)$.

Consider the short exact sequence of graded $\mathbb{Z}_{(p)}$-modules
$$0\rightarrow BP_*/I_n\xrightarrow{v_n}BP_*/I_n\rightarrow BP_*/I_{n+1}\rightarrow 0,$$
where $I_{n+1}=(p, v_1,\cdots, v_n)$, the ideal generated by $p, v_1, \dots, v_n$. By convention, we also set $v_0=p$ and $I_{-1}=0$. Furthermore, we let
$$\delta_n: \operatorname{Ext}^s(BP_*/I_{n+1})\rightarrow  \operatorname{Ext}^{s+1}(BP_*/I_{n})$$
denote the connecting homomorphism corresponding to the short exact sequence above, and for $t, n>0$, let
$$\alpha_t^{(n)}=\delta_0\delta_1\cdots\delta_{n-1}(v_n^t)\in \operatorname{Ext}^n(BP_*).$$
Conventionally we denote $\alpha_t^{(n)}$ for $n=1,2,3$ by $\alpha_t, \beta_t, \gamma_t$, respectively. Toda(\cite{To}, \cite{Ra}) proved the following
\begin{theorem}[\cite{Ra}, Theorem 1.3.18 (b)]\label{gamma}
 For $p \geq 7$, each $\gamma_t$ is represented by a nontrivial element of order $p$ in $\pi_{tq(p^2+p+1)-q(p+2)-3}(S^0)$, where
$q=2p-2$.
\end{theorem}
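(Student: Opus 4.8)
The plan is to work in the $p$-local Adams--Novikov spectral sequence $\operatorname{Ext}^{s,t}_{BP_*BP}(BP_*,BP_*)\Rightarrow\pi_{t-s}(S^0)_{(p)}$ and to prove four things about the Greek-letter class $\gamma_t=\delta_0\delta_1\delta_2(v_3^t)$: that it is nonzero on the $E_2$-page, that it is a permanent cycle, that it is not a boundary, and that the homotopy class it detects has order exactly $p$. I would first record the bookkeeping: $v_3^t\in\operatorname{Ext}^{0,2t(p^3-1)}(BP_*/I_3)$, and each $\delta_i$ raises cohomological degree by $1$ while dropping internal degree by $2(p^i-1)$, so $\gamma_t\in\operatorname{Ext}^{3,\,tq(p^2+p+1)-q(p+2)}(BP_*)$ with $q=2p-2$; since $2(p^3-1)=q(p^2+p+1)$ and $2(p^2+p-2)=q(p+2)$, this lies over the stem $tq(p^2+p+1)-q(p+2)-3$, matching the stated dimension.

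The heart of the matter is nonvanishing on $E_2$, and here I would run the chromatic spectral sequence. The element $v_3^t$ is manifestly nonzero in $\operatorname{Ext}^0(BP_*/I_3)$; after inverting $v_3$ and applying Morava's change-of-rings theorem one has $\operatorname{Ext}^{*}(v_3^{-1}BP_*/I_3)\cong H^*S(3)\otimes\F_p[v_3^{\pm1}]$, so the $\F_p$-algebra structure of $H^*S(3)$ recomputed in Section~3 controls the successive images $\delta_2(v_3^t)\in\operatorname{Ext}^1(BP_*/I_2)$, $\delta_1\delta_2(v_3^t)\in\operatorname{Ext}^2(BP_*/I_1)$, and finally $\gamma_t\in\operatorname{Ext}^3(BP_*)$. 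One checks at each stage that the class fails to lift along the relevant reduction map --- equivalently, is not annihilated by the corresponding $v_i$-Bockstein --- using the explicit low-degree generators and relations of $H^*S(3)$, not merely its Poincar\'e series. This is exactly why the algebra structure, rather than the underlying $\F_p$-module, is what is needed.

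For the remaining assertions I would combine a geometric realization with a sparseness argument. Since $p\geq 7$, the Smith--Toda complex $V(2)=S/(p,v_1,v_2)$ exists and carries a $v_3$-self map $\Sigma^{2(p^3-1)}V(2)\to V(2)$ --- this is where the hypothesis $p\geq 7$, rather than $p\geq 5$, is forced. Iterating it on the unit gives a class in $\pi_*V(2)$ with $BP$-Hurewicz image $v_3^t\neq 0$ in $BP_*/I_3$, and $\gamma_t$ is the image of this class under the boundary maps $\partial_2,\partial_1,\partial_0$ of the cofiber sequences $\Sigma^{|v_i|}V(i-1)\xrightarrow{v_i}V(i-1)\to V(i)$, with $V(-1)=S$. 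From this description $p\gamma_t=0$ is automatic: $\gamma_t=\partial_0(y)$ for some $y\in\pi_*(S/p)$, and $(\Sigma p)\circ\partial_0$ is a composite of two consecutive maps in a cofiber sequence, hence null, so the order of $\gamma_t$ divides $p$. That $\gamma_t$ is not a boundary follows from a congruence: the only conceivable incoming differential is a $d_2$ from $\operatorname{Ext}^1$ over the stem $tq(p^2+p+1)-q(p+2)-2\equiv-2\pmod q$ (the $d_3$ source $\operatorname{Ext}^0$ being trivial in positive stems), while $\operatorname{Ext}^{1,*}(BP_*)$ --- the $\alpha$-family --- is concentrated in stems $\equiv-1\pmod q$.

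The one genuinely delicate point, and the expected main obstacle, is showing that $\gamma_t$ supports no outgoing differential, i.e.\ that it survives to $E_\infty$. This requires controlling $\operatorname{Ext}^{\geq 5}_{BP_*BP}(BP_*,BP_*)$ over the stem one below $\gamma_t$'s, using Ravenel's vanishing line together with the classification of the Greek-letter families and their products in that range; it is precisely this step whose arithmetic works for $p\geq 7$ but not for smaller primes. Granting it, $\gamma_t$ is a nonzero permanent cycle that is not a boundary, hence detects a nonzero homotopy class in $\pi_{tq(p^2+p+1)-q(p+2)-3}(S^0)$, and that class has order exactly $p$ by the previous paragraph.
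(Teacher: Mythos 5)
The paper does not prove this statement at all: it is imported verbatim from Ravenel's book (Theorem 1.3.18(b)), i.e.\ it rests on Toda's geometric construction together with the Miller--Ravenel--Wilson nontriviality theorem, so there is no in-paper argument to compare yours against. That said, your outline reproduces the standard architecture of that external proof correctly in several places: the degree bookkeeping is right; the sparseness argument excluding incoming differentials is right (a $d_2$ source would sit in a stem $\equiv -2 \pmod q$ while $\operatorname{Ext}^{1,*}(BP_*)$ lives in stems $\equiv -1 \pmod q$, and $\operatorname{Ext}^0$ is trivial in positive stems); the order-$p$ claim does follow from factoring $\gamma_t$ through the boundary map of $S \xrightarrow{p} S \to S/p$; and $p\geq 7$ enters exactly where you say, through Toda's $v_3$-self map on the Smith--Toda complex $V(2)$ (only $V(2)$ itself is available at $p=5$).

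Two points need repair. First, your second paragraph misidentifies the input needed for $\gamma_t\neq 0$ on $E_2$. The algebra $H^*S(3)$ computes $\operatorname{Ext}(v_3^{-1}BP_*/I_3)$ and so only tells you that $v_3^t$ is nonzero at chromatic level $3$, which is trivial. Whether the successive connecting homomorphisms $\delta_2$, $\delta_1$, $\delta_0$ are nonzero on this class is decided by the images of the reduction maps coming from chromatic levels $2$, $1$, $0$ --- concretely by the Miller--Ravenel--Wilson computation of $\operatorname{Ext}^0 M_0^2$ (the $\beta$-family input) and its level-$1$ and level-$0$ analogues --- not by any structure at level $3$. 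This is the genuinely hard step of the theorem, and in your sketch it is black-boxed under ``one checks at each stage.'' Second, your final paragraph manufactures a difficulty that is not there: once $\gamma_t$ is realized as the image, under the geometric boundary maps, of an honest class in $\pi_*V(2)$ built from the $v_3$-self map, the geometric boundary theorem makes $\gamma_t$ a permanent cycle for free; no vanishing-line or product-structure control of $\operatorname{Ext}^{\geq 5}$ is required. The one genuinely delicate point is the $E_2$-nonvanishing of the previous paragraph, not the survival.
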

From now on we consider $\gamma_t$ also as the element in $\pi_*(S^0)$ that it represents. This is the first factor of the product that concerns us.

The other factor is first detected with the classical Adams spectral sequence by Cohen (\cite{Co}), which he denotes by $\zeta_n$, a permanant cocycly of
bi-degree $(3, 2(p-1)(1+p^{n+1}))$.

Our structure theorem of $H^*S(3)$ leads to the following
\begin{theorem}\label{nontrivprod}
 For $p \geq 7$, $s\not \equiv 0, \pm 1 \,\,  mod \,(p) $, if $n\not \equiv 1 \,\, mod\, (3)$, $n>1$  then $0 \neq \zeta_n \gamma_s \in \pi_*(S)$.
\end{theorem}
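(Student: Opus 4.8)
The plan is to prove the statement in the Adams--Novikov $E_2$-term and then pass to homotopy. By \cref{gamma}, $\gamma_s$ is a permanent cycle detected by a nonzero class $\overline{\gamma}_s$ of Adams--Novikov filtration $3$, and by Cohen's work $\zeta_n$ is a permanent cycle detected by a nonzero class $\overline{\zeta}_n$ of filtration $3$. Their product $\overline{\zeta}_n\overline{\gamma}_s$ is then automatically a permanent cycle of filtration $\ge 6$, and it detects $\zeta_n\gamma_s$ --- forcing $0\ne\zeta_n\gamma_s\in\pi_*(S)$ --- as soon as it is nonzero on $E_\infty$. So it suffices to prove: (i) $\overline{\zeta}_n\overline{\gamma}_s\ne 0$ in $\operatorname{Ext}^6_{BP_*BP}(BP_*,BP_*)$, and (ii) $\overline{\zeta}_n\overline{\gamma}_s$ is not in the image of any Adams--Novikov differential $d_r$, $r\ge 2$. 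For (ii) I would use a sparseness/permanent-cycle argument valid for $p\ge 7$: the relevant source groups $\operatorname{Ext}^{<6,*}_{BP_*BP}(BP_*,BP_*)$ either vanish in the pertinent internal degrees or are spanned by permanent cycles (Greek-letter elements), so no differential can hit $\overline{\zeta}_n\overline{\gamma}_s$.

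For (i) the strategy is to reduce modulo $I_3=(p,v_1,v_2)$, invert $v_3$, and invoke the Morava change-of-rings isomorphism $\operatorname{Ext}^{*}_{BP_*BP}(BP_*,v_3^{-1}BP_*/I_3)\cong K(3)_*\otimes_{\mathbb{Z}/p}H^*S(3)$ together with the structure theorem for $H^*S(3)$ of Section 3. Under this reduction-and-localization map the Greek-letter description $\overline{\gamma}_s=\delta_0\delta_1\delta_2(v_3^{\,s})$ shows that $\overline{\gamma}_s$ is carried to $v_3^{\,s}$ times a fixed nonzero class $g\in H^3S(3)$; this is the point at which the hypothesis $s\not\equiv 0,\pm 1 \bmod p$ is used (the congruence conditions under which the third Greek-letter element has its expected image). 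On the other hand Cohen's identification of $\zeta_n$, detected by $h_0b_n$, re-read through this change of rings, carries $\overline{\zeta}_n$ to $v_3^{\,m}$ times a class $h_n\in H^3S(3)$ which, by the Frobenius relations $t_i^{p^3}\equiv t_i$ in $S(3)$, depends only on $n\bmod 3$. By multiplicativity $\overline{\zeta}_n\overline{\gamma}_s$ maps to $v_3^{\,s+m}\otimes(g\,h_n)\in K(3)_*\otimes H^6S(3)$; since $v_3$ is a unit there, (i) --- and with it the whole theorem --- reduces to the purely algebraic assertion that $g\,h_n\ne 0$ in the ring $H^6S(3)$.

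This last assertion is the crux, and the place where the $\mathbb{Z}/p$-algebra structure of $H^*S(3)$, not merely its Poincaré series, is indispensable: one must identify the correct nonzero generator of $H^6S(3)$ and check that $g\,h_n$ is not annihilated by one of the many relations. Internal-degree bookkeeping both guides and verifies this. Since $\deg v_3=q(p^2+p+1)$ and $p^3\equiv 1\bmod(p^2+p+1)$, the class $g\,h_n$ must lie in the residue class of $q(p^{n+1}-p-1)$ modulo $q(p^2+p+1)$, which is $-q$, $-2q(p+1)$, or $-qp$ according as $n\equiv 0,1,2\bmod 3$. Reading off the ring structure of Section 3, $H^6S(3)$ contains a nonzero element in the appropriate degree, which $g\,h_n$ hits, precisely when $n\not\equiv 1\bmod 3$; the remaining hypothesis $n>1$ removes a handful of low-degree coincidences among the generators. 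Once $g\,h_n\ne 0$ is established, $\overline{\zeta}_n\overline{\gamma}_s$ has nonzero image, hence is itself nonzero, which gives (i); combined with (ii) this completes the proof. The main obstacle throughout is thus the explicit multiplication in the complicated ring $H^*S(3)$ --- verifying that the relevant product of generators survives --- which is exactly why the full algebra structure of Section 3, rather than a module-level computation, is needed.
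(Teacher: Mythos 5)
Your overall strategy --- push the product through $\varphi$ into $H^*S(3)\otimes K(3)_*$ via the change-of-rings isomorphism, verify nonvanishing using the ring structure of $H^*S(3)$, and finish with a sparseness argument for convergence --- is exactly the paper's. But there is a genuine gap in how you set up the $E_2$-level computation for the $\zeta_n$ factor. You treat $\zeta_n$ as detected by a single well-identified Adams--Novikov class $\overline{\zeta}_n$ whose image under $\varphi$ is ``$v_3^m$ times a class $h_n$.'' In fact Cohen only identifies the detecting class in the \emph{classical} Adams spectral sequence as $h_0b_n$; pulling this back through the Thom reduction determines the Adams--Novikov representative only up to correction terms, so it has the form $\alpha_1\beta_{p^n/p^n}+\alpha_1 x$ with $x=\sum a_{s,k,j}\beta_{sp^k/j}$ unknown. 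Your computation of $\varphi(\overline{\zeta}_n)$ is not justified until you control $\varphi(\alpha_1 x)$. The paper does this in two steps: an internal-degree count forces $x$ to be a sum of $\beta_{sp^k/p^k}$ with $s=\frac{p^{n-k+1}+1}{p+1}>1$, and then Lemma \cref{Thom map}(1) (resting on Ravenel's computation of $\varphi$ on these $\beta$'s) shows each such term maps to $0$. Without this step, the class whose nonvanishing you establish in $H^6S(3)$ need not be the image of the class actually detecting $\zeta_n\gamma_s$.

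A secondary inaccuracy: $\varphi(\gamma_s)$ is not $v_3^s$ times one fixed class. The actual computation gives $\varphi(\gamma_s)=s(s^2-1)\nu_0-s(s-1)\rho k_1$, a two-term combination with distinct $s$-dependent coefficients; both terms must be multiplied against $h_{1,0}e_{4,\overline{n+1}}$ (the $\rho k_1$ term happens to die, and the hypothesis $s\not\equiv 0,\pm1\bmod p$ is needed precisely so that the surviving coefficient $s(s^2-1)$ is a unit). Relatedly, the vanishing for $n\equiv 1\bmod 3$ is a statement about the specific product $h_{1,0}\,e_{4,\overline{n+1}}\,\nu_0$ in the ring, not merely about which internal degrees of $H^6S(3)$ are nonzero, so your degree-residue bookkeeping, while correct, cannot by itself decide the three cases; the explicit multiplication table of the appendix is what settles them. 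Your convergence step (ii) is fine and is essentially the paper's sparseness argument ($t\equiv 0\bmod q$ forces the first possible differential to be $d_{q+1}$, which cannot reach filtration $6$ from below).
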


We briefly explain the idea of the proof of Theorem \cref{nontrivprod}.
Let  $\mathcal{A}_*$ be the dual of the mod $p$ Steenrod algebra and consider the mod $p$ Thom map
$$\Phi: BP\longrightarrow K(\mathbb{Z}/p),$$
where the latter is the Eilenberg-Maclane Spectrum associated to $\mathbb{Z}/p$. This map induces a homomorphism
$$\Phi: \operatorname{Ext}_{BP_*BP}(BP_*,BP_*)\longrightarrow\operatorname{Ext}_{\mathcal{A}_*}(\mathbb{Z}/p,\mathbb{Z}/p).$$
Cohen \cite{Co} detected that $h_0b_n\in \operatorname{Ext}_{\mathcal{A}_*}(\mathbb{Z}/p,\mathbb{Z}/p)$, $n>0$, is a permanent cycle in the
classical Adams spectral sequence and it converges to $\zeta_n$ in $\pi_*S^0$. From the Thom map we find that it was
$\alpha_1(\beta_{p^n/p^n}+x)\in\operatorname{Ext}_{BP_*BP}(BP_*,BP_*)$ that converges to $\zeta_n$ in the
Adams-Novikov spectral sequence,   where $ x = \sum\limits_{s,k, j } a_{s, k, j}\beta_{sp^k/j}$ and $0 \leq a_{s, k, j} \leq p-1$, $a_{1, n, p^n} =0$.

   Consider the canonical homomorphism $BP_*\longrightarrow v_3^{-1}BP_*/I_3$, which induces homomorphism
\[
Ext^{*,*}_{BP_*BP}(BP_*, BP_*)\longrightarrow Ext^{*,*}_{BP_*BP}(BP_*, v_3^{-1}BP_*/I_3).
\]
On the other hand, consider the map $BP_*\rightarrow K(3)_*$, where $K(3)$ is the $3$-rd Morava K-theory.
By the change of ring theroem in Chapter 5 of \cite{Ra}, we have
$$\operatorname{Ext}_{BP_*BP}(BP_*, v_3^{-1}BP_*/I_3)\cong \operatorname{Ext}_{K(3)_*K(3)}(K(3)_*,K(3)_*)
=H^*S(3)\otimes\mathbb{Z}/p[v_3, v_3^{-1}]$$
and $\varphi$ is the composition
\[
\varphi: Ext^{*,*}_{BP_*BP}(BP_*, BP_*)\longrightarrow Ext^{*,*}_{BP_*BP}(BP_*, v_3^{-1}BP_*/I_3)
\cong H^*S(3)\otimes\mathbb{Z}/p[v_3, v_3^{-1}].
\]

 We find the images of the representation of
$\alpha_1(\beta_{p^n/p^n}+  x)$ and $\gamma_s$ under $\varphi$
and show that the product of their images reduction in $H^*S(3)$ is nontrivial. This implies that
$\alpha_1(\beta_{p^n/p^n}+  x)\gamma_s$  is nontrivial in $\operatorname{Ext}_{BP_*BP}(BP_*,BP_*)$
and then $\zeta_n\gamma_s$ is nontrivial in $\pi_*(S)$.
This is why the algebra structure of $H^*S(3)$ is essential.

This paper is organized as follows. In section 2, we define a May-type filtration upon the Hopf algebra $S(3)$
and consider the corresponding spectral sequence $\{E_r^{s,t,M}, d_r\}\Longrightarrow H^*S(3)$.
In section 3, we recompute the cohomology ring of the Morava stabilizer algebra $S(3)$ with the spectral sequence constructed in section 2. In section 4, we prove that the product $\zeta_n\gamma_s\in\pi_*(S^0)$ is nontrivial.

\section{ The May Spectral Sequence for $H^*(S(3))$}

\subsection{ The May spectral sequence}

Let $p $ be a prime, $ BP_*=\mathbb{Z}_{(p)}[v_1, v_2, \cdots] $ and $BP_*BP=BP_*[t_1, t_2, \cdots]$. For the Hazewinkel's generators described inductively by
$ v_s=pm_s - \sum\limits_{i=1}^{s-1}v_{s-i}^{p^i}m_i$ (cf \cite{Ha,Mi,Ra}). The coproduct map $\Delta: BP_*BP \to BP_*BP \otimes_{BP_*} BP_*BP$ is given by
$$ \sum\limits_{i+j=s}m_i(\Delta t_j)^{p^i}=\sum\limits_{i+j+k=s}m_it_j^{p^i}\otimes t_k^{p^{i+j}} $$
and the right unit $\eta_R : BP_* \to BP_*BP$ is given by
$$ \eta_R(m_n) = \sum\limits_{i+j=n}m_it_j^{p^i}. $$
One can easily prove that
\begin{eqnarray} \Delta(t_1)=t_1 \otimes 1 +1 \otimes t_1\end{eqnarray}
and
\begin{eqnarray} \Delta(t_2)= \sum\limits_{i+j=2}t_i \otimes t_j^{p^i}-v_1 b_{1,0} \end{eqnarray}
where $p \cdot b_{1,0}=\Delta(t_1^p)-t_1^p \otimes 1 - 1 \otimes t_1^p$. Inductively define
$$p \cdot b_{s,k-1}=\Delta (t_s^{p^k})- \sum\limits_{i+j=s}t_i^{p^k}\otimes t_j^{p^{i+k}}+ \sum\limits_{0 < i < s}v_i^{p^k}b_{s-i,k+i-1},$$
 one has
$$\Delta (t_{s+1})= \sum\limits_{i+j=s+1}t_i\otimes t_j^{p^i}- \sum\limits_{i=1}^s v_ib_{s+1-i,i-1}.$$
It is convenient to give some specific examples, which can be found in \cite{Ka,Lee} :
\begin{eqnarray}
 \eta_R(v_1) &=& v_1+pt_1   \notag\\
 \eta_R(v_2) &\equiv& v_2 + v_1t_1^p+pt_2 -v_1^pt_1 \quad mod \, (p^2,v_1^{p^2}) \notag\\
 \eta_R(v_3) &\equiv& v_3+v_2t_1^{p^2}+v_1t_2^p+pt_3-v_2^pt_1 -v_1^2v_2^{p-1}t_1^p \quad mod \, (p^2, pv_1, v_1^3) \notag\\
 \Delta(t_5) &\equiv& t_5 \otimes 1 +1 \otimes t_5+t_4 \otimes t_1^{p^4}+t_3 \otimes t_2^{p^3}+t_2 \otimes t_3^{p^2}+t_1 \otimes t_4^p -v_3b_{2, 2}-v_4b_{1, 3}\quad mod \, (p, v_1, v_2)  \notag\\
\end{eqnarray}
where

$b_{1,k}= \sum\limits_{i=1}^{p^{k+1}-1} $ $p^{k+1}\choose i$$ /p$ $ t_1^i \otimes t_1^{p^{k+1}-i}$,

$b_{2,k}= \frac{1}{p} \left(\Delta (t_2^{p^{k+1}})- \sum\limits_{i+j=2}t_i^{p^{k+1}}\otimes t_j^{p^{i+{k+1}}}+ v_1^{p^{k+1}}b_{1,k+1} \right )$.

Thus, for the Morava K-theory $K(3)_*=\mathbb{Z}/p[v_3, v_3^{-1}]$, the Hopf algebra $\Sigma(3)=K(3)_* \otimes_{BP_*} BP_*BP \otimes_{BP_*} K(3)_*$ is isomorphic to
$$\Sigma(3)=K(3)_*[t_1, t_2,...]/(v_3t_i^{p^3}-v_3^{p^i}t_i), \quad for \, i \geq 1.$$
And $S(3)=\mathbb{Z}/p \otimes_{K(3)_*}\Sigma(3) \otimes_{K(3)_*}  \mathbb{Z}/p$ is isomorphic to
$$ S(3)=\mathbb{Z}/p[t_1, t_2,...]/(t_i^{p^3}-t_i), \quad for \, i \geq 1.$$
The inner degree of $t_s$ in $S(3)$ is
$$ |t_s| \equiv 2(p^s-1) \quad mod \, 2(p^3-1),$$
because $v_3$ is sent to $1$. The structure map $\Delta: S(3) \to S(3) \otimes S(3)$ acts on $t_s$ as follows
\begin{equation}
 \Delta(t_s)=
\begin{cases}
 t_s \otimes 1 +1 \otimes t_s +\sum\limits_{1 \leq k \leq s-1}t_k \otimes t_{s-k}^{p^k}& \text{if $s\leq 3 $ },\\
 t_s \otimes 1 +1 \otimes t_s +\sum\limits_{1 \leq k \leq s-1}t_k \otimes t_{s-k}^{p^k}-b_{s-3,2}& \text{if $s> 3$}.
\end{cases}
\end{equation}
Here $  b_{1,0}=\frac{1}{p}(\Delta(t_1^p)-t_1^p \otimes 1 - 1 \otimes t_1^p)$ and
$$ b_{s,k-1}=\frac{1}{p}\left(\Delta (t_s^{p^k})- \sum\limits_{i+j=s}t_i^{p^k}\otimes t_j^{p^{i+k}}+ b_{s-3,k+2}\right).$$

\begin{definition} \label{MayFLT}
 In the Hopf algebra S(3), we define May filtration M as follows:
 \begin{enumerate}
 \item For $s=1,2,3$, set the May filtration of $t_s^{p^j}$ as $M(t_s^{p^j})=2s-1$.
 \item For $s>3$ and $j \in \mathbb{Z}/3$, from $M(b_{s-3,j})=p\cdot M(t_{s-3}^{p^j})$,
inductively set  the May filtration of $t_s^{p^j}$ as
       $$ M(t_s^{p^j})=max\left\{M(t_k^{p^j})+M(t_{s-k}^{p^{j+k}}),\ \ \   p\cdot M(t_{s-3}^{p^{j+2}})|0< k < s\right\}+1 . $$
\end{enumerate}
\end{definition}






Let $F^{*, M}S(3)$ be the sub-module of $S(3)$ generated by the elements with May filtration no larger than $M$. Set
$E^{*, M}(3) = F^{*, M}S(3)/F^{*, M-1}S(3)$. From (2.4), we get the coproduct in
\[
E^{*, *}(3)=\bigoplus_{M\geqslant 0}F^{*,M}S(3)/F^{*,M-1}S(3),
\]
that is, $\Delta(t_s)= t_s \otimes 1 +1 \otimes t_s.$ Thus
   \begin{equation}
     E^{*, *}(3)\cong \bigotimes_{s\geqslant 1} T[t_s^{p^j}| j\in \mathbb{Z}/3],
   \end{equation}
   is a primitively generated Hopf algebra, where $T[\,]$ denote the truncated polynomial algebra of height p on the indicated generators, and each $t_s^{p^j}$ is a primitive element.

   Let $C^{s, *}S(3)= S(3)^{\otimes s}$ denote the cobar construction of $S(3)$. The differential $d: C^{s, t}S(3) \to C^{s+1, t}S(3)$ is given on generators as
    \begin{eqnarray}
  d( \alpha_1 \otimes \cdots  \otimes \alpha_s )=\sum\limits_{i=1}^s(-1)^i \alpha_1  \otimes \cdots \otimes \alpha_{i-1} \otimes \Delta(\alpha_i) \otimes \alpha_{i+1}\cdots  \otimes \alpha_s  \\
       + 1 \otimes \alpha_1 \otimes \cdots  \otimes \alpha_s + (-1)^{s+1} \alpha_1 \otimes \cdots  \otimes \alpha_s \otimes 1. \notag
  \end{eqnarray}
  In general, the generator $\alpha_1 \otimes \cdots  \otimes \alpha_s $ of $C^{s, t}S(3)$ is denoted by $[\alpha_1 | \cdots  | \alpha_s]$. For the generator $[\alpha_1 | \cdots  | \alpha_s]$, define its May filtration as
  $$M([\alpha_1 | \cdots  | \alpha_s])=M(\alpha_1)+ \cdots + M(\alpha_s). $$

  Let $F^{*, *, M}$ denote the sub-complex of $C^{*, *}S(3)$ generated by the elements with May filtration not greater than M. Then we obtain  a short exact sequence
  \begin{equation}
     0 \to  F^{*, *, M-1} \to  F^{*, *, M} \to  E_0^{*, *, M} \to  0
   \end{equation}
   of cochain complexes, where $E_0^{*, *, M}$ denote $ F^{*, *, M}/F^{*, *, M-1}$. The cochain complex
$E_0^{*, *, *}$ is isomorphic to the cobar complex of $E^{*, *}(3)$
given in (2.5). Let $E_1^{s, *, M}$ be the homology of $(E_0^{*, *, M}, d_0)$. Then (2.7)
gives rise to the May spectral sequence $\{ E_r^{s, t, M}S(3), d_r \}$ that converges to
$H^{s,t}S(3)\stackrel{\Delta}{=}H^{s, t}(C^{*, t}S(3), d)= \operatorname{Ext}_{S(3)}^{s, t}(\mathbb{Z}/p, \mathbb{Z}/p)$
as $\mathbb{Z}/p$-algebras.

   \begin{theorem}
    The Hopf algebra $S(3)$ can be given an increasing filtration as in definition \cref{MayFLT}.
The associated spectral sequence, so called May spectral sequence (MSS) converges to
$H^*S(3)$. The $E_1$-term $E_1^{s, t, M}$ is isomorphic to
    $$ E[h_{i,j}|i \geq 1, j \in \mathbb{Z}/3] \otimes P[b_{i,j}| i \geq 1, j \in \mathbb{Z}/3].$$
   The homological dimension of each element is given by $s(h_{i, j})=1$,$s(b_{i, j})=2$ and the degree is given by
   $$h_{i,j} \in E_1^{1,2(p^i-1)p^j, *}S(3), \quad b_{i,j} \in E_1^{2,2(p^i-1)p^{j+1,*}}S(3),$$
   here $h_{i, j}$ corresponds to $t_i^{p^j}$ and $b_{i, j}$ corresponds to $\sum\limits_{k=1}^{ p-1}\binom{p}{k}/p \, [t_i^{kp^j}|\,t_i^{(p-k)p^j}]$.
   One has $d_r  : E_r^{s, t, M}S(3) \to E_r^{s+1, t, M-r}S(3)$. If $x \in E_r^{s, t, M}$, then $$d_r(xy)=d(x)\cdot y+(-1)^s x \cdot d_r(y).$$

In the $E_1$-term of this spectral sequence, we have the following relations:
    \begin{center}
        $h_{i, j}\cdot h_{i_1,j_1}=- h_{i_1, j_1} \cdot h_{i, j}$,

        $h_{i, j} \cdot b_{i_1, j_1}=b_{i_1, j_1} \cdot h_{i, j}$,

        $b_{i, j} \cdot b_{i_1, j_1}=b_{i_1, j_1} \cdot b_{i, j}$ .
    \end{center}
 \end{theorem}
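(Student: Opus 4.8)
The plan is to deduce the theorem from the general machinery of May-type spectral sequences attached to a multiplicatively filtered cochain complex, together with the classical computation of the cobar cohomology of a primitively generated Hopf algebra. \emph{Step 1: the filtration is a Hopf algebra filtration, with associated graded as in (2.5).} First I would check that Definition~\ref{MayFLT} assigns a well-defined non-negative integer to each generator: for $s\le 3$ this is immediate, and for $s>3$ the recursion expresses $M(t_s^{p^j})$ through the quantities $M(t_k^{p^j})$ and $M(t_{s-k}^{p^{j+k}})$ with $0<k<s$, so it terminates by induction on $s$; the same induction shows the value depends only on $j\bmod 3$, as it must since $t_i^{p^3}=t_i$ in $S(3)$. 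Extending $M$ to monomials by additivity gives an increasing, exhaustive, bounded-below algebra filtration on $S(3)$. The essential point is compatibility with $\Delta$: one verifies, by induction on $s$, that in (2.4) every cross term $t_k\otimes t_{s-k}^{p^k}$ and every term hidden inside $b_{s-3,2}$ (equivalently, inside all the nested $b_{s-i,k}$) has May filtration strictly smaller than $M(t_s)=M(t_s\otimes 1)=M(1\otimes t_s)$ — this is precisely the inequality built into the $\max$ of Definition~\ref{MayFLT}, the entry $p\cdot M(t_{s-3}^{p^{j+2}})$ accounting for the $b$-terms. Hence in $E^{*,*}(3)$ one has $\Delta(t_s)=t_s\otimes 1+1\otimes t_s$, so $E^{*,*}(3)$ is the primitively generated Hopf algebra $\bigotimes_{s\ge 1}T[t_s^{p^j}\mid j\in\Z/3]$ of (2.5).

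\emph{Steps 2 and 3: the spectral sequence and its $E_1$-term.} The short exact sequence (2.7) of cochain complexes yields the spectral sequence $\{E_r^{s,t,M},d_r\}$; as the filtration is exhaustive and bounded below it converges to $H^{*}S(3)$ exactly as recalled before the statement, and since the filtration is multiplicative one obtains $d_r\colon E_r^{s,t,M}\to E_r^{s+1,t,M-r}$ together with the Leibniz formula $d_r(xy)=d_r(x)\cdot y+(-1)^s x\cdot d_r(y)$. It remains to identify $E_1^{*,*,*}$, which is the cobar cohomology of $E^{*,*}(3)$. Since $E^{*,*}(3)$ is a tensor product of the finite-dimensional sub-Hopf-algebras $T[t_s^{p^j}\mid j\in\Z/3]$ (an increasing union of finite tensor products), the K\"unneth theorem reduces the computation to that of a single truncated polynomial Hopf algebra $T[x]$ of height $p$ on one primitive generator. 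For the latter the cobar cohomology is the classical $E[h]\otimes P[b]$ with $h=[x]$ in homological degree $1$ and $b=\sum_{k=1}^{p-1}\binom{p}{k}/p\,[x^k|x^{p-k}]$ in homological degree $2$ (the cocycle property reflecting the integrality of $\frac1p\big((x+y)^p-x^p-y^p\big)$, and nontriviality being standard; cf. \cite{Ra}). Taking $x=t_i^{p^j}$, whose internal degree in $S(3)$ is $\equiv 2(p^i-1)p^j\bmod 2(p^3-1)$, we get $E_1\cong E[h_{i,j}\mid i\ge 1,\,j\in\Z/3]\otimes P[b_{i,j}\mid i\ge 1,\,j\in\Z/3]$ with $h_{i,j}\in E_1^{1,2(p^i-1)p^j,*}$ and $b_{i,j}\in E_1^{2,2(p^i-1)p^{j+1},*}$ (the internal degree of $b_{i,j}$ being $|x^k|+|x^{p-k}|=p\,|t_i^{p^j}|$), and with $M(h_{i,j})=M(t_i^{p^j})$, $M(b_{i,j})=p\cdot M(t_i^{p^j})$ from the construction.

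\emph{Step 4, and the main obstacle.} The stated relations in the $E_1$-term are then a formal consequence of graded-commutativity of the cup product on $\operatorname{Ext}$: every internal degree occurring is even, so the Koszul sign in $xy=(-1)^{s_x s_y}yx$ depends only on the homological degrees, which gives $h_{i,j}h_{i_1,j_1}=-h_{i_1,j_1}h_{i,j}$, $h_{i,j}b_{i_1,j_1}=b_{i_1,j_1}h_{i,j}$ and $b_{i,j}b_{i_1,j_1}=b_{i_1,j_1}b_{i,j}$, consistent with the exterior-times-polynomial description. The only step demanding genuine care is Step 1: because the formula (2.4) for $\Delta(t_s)$ conceals the whole tower of elements $b_{s-i,k}$, the induction showing that every term other than $t_s\otimes 1$ and $1\otimes t_s$ strictly lowers the May filtration — and that the height-$p$ truncation is respected upon passing to $E^{*,*}(3)$ — must be carried out attentively; everything after it is routine homological algebra.
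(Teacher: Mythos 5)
Your proposal is correct and follows essentially the same route as the paper: identify the associated graded $E^{*,*}(3)$ as the primitively generated Hopf algebra of (2.5), then compute $E_1$ by the K\"unneth theorem from the classical $\operatorname{Ext}_{T[x]}(\mathbb{Z}/p,\mathbb{Z}/p)=E(h)\otimes P(b)$ for a truncated polynomial algebra on an even primitive generator. The only cosmetic difference is that the paper obtains the sign relations by the explicit cobar computation $d_0(t_i^{p^j}\cdot t_{i_1}^{p^{j_1}})=-t_i^{p^j}\otimes t_{i_1}^{p^{j_1}}-t_{i_1}^{p^{j_1}}\otimes t_i^{p^j}$, whereas you invoke graded commutativity of the cup product together with the evenness of all internal degrees; both are fine, and your extra care in verifying that the filtration of Definition~\ref{MayFLT} is compatible with the coproduct (which the paper relegates to the discussion preceding the theorem) is exactly the right point to be attentive about.
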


\begin{proof}
 From \cite{Ra}, we can see that for the truncated polynomial algebra $T[x]$ with $|x|\equiv 0$ mod 2 and $x$ primitive,
 $$\operatorname{Ext}_{T[x]}(\mathbb{Z}/p, \mathbb{Z}/p)=E(h) \otimes P(b)$$
where $h \in \operatorname{Ext}^1$ is represented by $[x]$ in the cobar complex and $b \in \operatorname{Ext}^2$ by
$\sum\limits_{i=1}^{p-1} \binom{p}{i}/p [x^i|x^{p-i}]$. Notice that the $E_0$-term of the spectral sequence is isomorphic
to the cobar complex of $E^{*, *}(3)$ given by (2.5), we see that
$$ H^{s, *, M}(E_0^{*, t, M}, d_0)=\operatorname{Ext}_{E^{\ast, \ast}(3)}^{s, t}(\mathbb{Z}/p, \mathbb{Z}/p)=
\bigotimes_{s\geqslant 1} \operatorname{Ext}_{T[t_s^{p^j}|j \in \mathbb{Z}/3]}^{*, *}(\mathbb{Z}/p, \mathbb{Z}/p).$$
Thus, the May's $E_1$-term
$$E_1^{s, t, M}=E[h_{i,j}|i \geq 1, j \in \mathbb{Z}/3] \otimes P[b_{i,j}|i \geq 1, j \in \mathbb{Z}/3].$$
Notice that $d_0(t_i^{p^j} \cdot t_{i_1}^{p^{j_1}})=-t_i^{p^j} \otimes t_{i_1}^{p^{j_1}} - t_{i_1}^{p^{j_1}} \otimes t_i^{p^j}$, we get $h_{i, j}\cdot h_{i_1,j_1}=- h_{i_1, j_1} \cdot h_{i, j}$. In a similar way, one can prove that   $h_{i, j} \cdot b_{i_1, j_1}=b_{i_1, j_1} \cdot h_{i, j}$ and
        $b_{i, j} \cdot b_{i_1, j_1}=b_{i_1, j_1} \cdot b_{i, j}$ .
\end{proof}

\subsection{ The first May differential}

From now on we fix  $p\geqslant 7$ being an odd prime. From Definition \cref{MayFLT}, we see that the May filtration
is given by
\begin{align*}
M(t_1^{p^j}) = & 1, & M(t_2^{p^j}) = & 3, & M(t_3^{p^j}) = & 5, \\
M(t_4^{p^j}) = & p+1, & M(t_5^{p^j})= & 3p+1, & M(t_6^{p^j}) = & 5p+1.
\end{align*}
By induction we see that for $s=1,2,3$
\[
M(t_{3r+s}^{p^j})=p\cdot M(t_{3r+s-3}^{p^{j+2}})+1=(2s-1)p^r+p^{r-1}+\cdots +1>M(t_{k}^{p^j})+M(t_{3r+s-k}^{p^{j+k}})+1,
\]
here $0<k<3r+s$. Thus from (2.4) one has the first May differential $d_1: E_1^{s,*,M}S(3)\longrightarrow E_1^{s+1,*, M-1}S(3)$
\begin{equation}
 d_1(h_{s,j})=
\begin{cases}
 -\sum\limits_{i=1}^{s-1}h_{i, j}\,h_{s-i,j+i}& \text{if $s \leq 3$},\\
b_{s-3, j+2}& \text{if $s> 3$.}
\end{cases}
\end{equation}
Each  $b_{s, j}$ is the boundary of the first May differentials.

\begin{theorem}
The  $E_2$-term of the May spectral sequence is isomorphic to the cohomology  of
\begin{eqnarray*}
E[h_{3, j},h_{2, j}, h_{1, j}| j\in \mathbb{Z}/3].
\end{eqnarray*}
The first May differential is given by
\begin{eqnarray*}
 d_1(h_{s,j}) &=&-\sum\limits_{i=1}^{s-1}h_{i, j}\,h_{s-i,j+i}  \quad\quad \,\, \, \mbox{ for $s \leq 3$. }
\end{eqnarray*}
\end{theorem}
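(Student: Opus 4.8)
The plan is to compute the homology of the first May differential $d_1$ restricted to the relevant part of $E_1^{*,*,*}$, and to identify this homology with $\operatorname{Ext}$ of the exterior algebra $E[h_{3,j},h_{2,j},h_{1,j}\mid j\in\mathbb Z/3]$ — equivalently, of a primitively generated Hopf algebra on nine odd-degree generators. By the previous theorem, $E_1^{*,*,*}\cong E[h_{i,j}\mid i\ge 1,j\in\mathbb Z/3]\otimes P[b_{i,j}\mid i\ge 1,j\in\mathbb Z/3]$, and from the formula (2.9) we have $d_1(h_{s,j})=b_{s-3,j+2}$ for $s>3$, while $d_1(h_{s,j})=-\sum_{i=1}^{s-1}h_{i,j}h_{s-i,j+i}$ is a decomposable (quadratic in the $h$'s) element for $s\le 3$, and each $b_{s,j}$ is a $d_1$-boundary. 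So the first task is to split off, as a tensor factor of the $d_1$-complex, the acyclic (Koszul-type) piece generated by the pairs $(h_{s,j},b_{s-3,j+2})$ for $s>3$: since $d_1(h_{s,j})=b_{s-3,j+2}$ exactly, the subalgebra $E[h_{s,j}\mid s>3]\otimes P[b_{s-3,j+2}\mid s>3]$ is a tensor product of elementary acyclic complexes $E[h]\otimes P[b]$ with $d_1(h)=b$, hence has homology $\mathbb Z/p$ concentrated in degree $0$.

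The key step is therefore to verify that the $d_1$-complex factors as a tensor product of this acyclic piece with the sub-complex $\bigl(E[h_{1,j},h_{2,j},h_{3,j}\mid j\in\mathbb Z/3],\,d_1\bigr)$, where on the latter $d_1$ is the quadratic differential displayed in the theorem. The Leibniz rule $d_1(xy)=d_1(x)y+(-1)^{s}x\,d_1(y)$ and the multiplicative structure of $E_1^{*,*,*}$ make this a purely formal bookkeeping argument: the generators $h_{s,j}$ with $s\le 3$ and their products have $d_1$-images landing in the subalgebra they generate, while the $b_{i,j}$ all arise as $d_1$ of $h_{i+3,j'}$, so the whole of $P[b_{i,j}]\otimes E[h_{s,j}\mid s>3]$ is the acyclic tensor factor. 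By the Künneth theorem, $E_2^{*,*,*}=H(E_1,d_1)$ is then isomorphic to $H\bigl(E[h_{1,j},h_{2,j},h_{3,j}\mid j\in\mathbb Z/3],d_1\bigr)$, which is exactly "the cohomology of $E[h_{3,j},h_{2,j},h_{1,j}\mid j\in\mathbb Z/3]$" in the sense of the statement (the $\operatorname{Ext}$ of this primitively generated Hopf algebra, computed by its own cobar/Koszul complex with differential $d_1$). The second assertion of the theorem, the formula for $d_1(h_{s,j})$ with $s\le 3$, is then just the restriction of (2.9) to that range, which has already been established.

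The main obstacle I expect is not conceptual but a matter of checking the May-filtration arithmetic that underlies (2.9): one must confirm that for $s>3$ the leading term of $\Delta(t_s)$ in the associated graded is precisely $b_{s-3,j+2}$ (i.e. that $p\cdot M(t_{s-3}^{p^{j+2}})$ strictly dominates $M(t_k^{p^j})+M(t_{s-k}^{p^{j+k}})+1$ for all $0<k<s$), so that $d_1(h_{s,j})=b_{s-3,j+2}$ with no lower-filtration corrections, and also that no $b_{i,j}$ is hit twice or interacts with the quadratic part. The excerpt already records the needed filtration estimates $M(t_{3r+s}^{p^j})=(2s-1)p^r+p^{r-1}+\cdots+1$ and the strict inequality $>M(t_k^{p^j})+M(t_{3r+s-k}^{p^{j+k}})+1$, so this reduces to invoking those computations; the remaining work is the formal tensor-splitting argument and the Künneth reduction, both routine.
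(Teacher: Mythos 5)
Your proposal is correct and takes essentially the same route as the paper: the paper filters the $E_1$-term by the sub-DGAs $F(n)$ of (2.9) and uses that for $n>3$ each stage adjoins only the acyclic Koszul pairs $\bigl(h_{n,j},\,b_{n-3,j+2}\bigr)$, which is exactly your tensor-splitting-plus-K\"unneth argument carried out one level at a time. The filtration arithmetic you flag as the remaining check (that $p\cdot M(t_{s-3}^{p^{j+2}})+1$ strictly dominates $M(t_k^{p^j})+M(t_{s-k}^{p^{j+k}})+1$, so $d_1(h_{s,j})=b_{s-3,j+2}$ with no corrections and each $b_{i,j}$ is hit exactly once) is precisely what is recorded immediately before the theorem, so nothing further is needed.
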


\begin{proof}
   From the May's $E_1$-term
  we define a filtration, for each $n \geqslant 1$
   \begin{equation}
         F(n)=
        \begin{cases}
           E[h_{i, j}|1 \leq i \leq n, j \in \mathbb{Z}/3]& \text{for $ 1 \leq n \leq 3$},\\
           E[h_{i, j}|1\leq i \leq n, j \in \mathbb{Z}/3]\otimes P[b_{i, j}|1 \leq i\leq n-3, j \in \mathbb{Z}/3]& \text{for $n>3$}.\\
        \end{cases}
   \end{equation}
   The filtration gives rise to a spectral sequence and thus gives the theorem.
\end{proof}

To compute the $E_2$-page of the May spectral sequence
\[
E_2^{s,*,M}=H^{s,*,M}(E[h_{3,j}, h_{2,j}, h_{1,j}|j\in \mathbb{Z}/3]),
\]
we will give a filtration on the exterior algebra $F(n)=E[h_{i,j}|1\leqslant i\leqslant n, j\in\mathbb{Z}/3]$ for
$n=2,3$. This filtration gives rise to a spectral sequence and the spectral sequences allow
us to compute $H^*(F(2))$ from $H^*(F(1))$ and then compute $H^*(F(3))$ from $H^*(F(2))$ (cf \cite{Ra1}).

Let $E^i(n)=\mathbb{Z}/p[h_{n, j_1}\cdots h_{n, j_i}]$, the sub-module generated by elements of homological dimension
i,
and $h_{n, j_k} \neq h_{n, j_l}$ if $j_k \neq j_l$. Then in $F(n)$ for $n=2,3$, let
\begin{equation}
F^{k}(n)=\bigoplus\limits_{i \leq k} E^i(n) \otimes E[h_{i, j}|1 \leq i \leq n-1, j\in \mathbb{Z}/3],
\end{equation}
then we have the following statement.
\begin{theorem}[\cite{Ra1}, (1.10) Theorem]\label{DFS(3)}
The spectral sequence induced by  the filtration (2.8) converges to the cohomology of
$F(n)=E[h_{i, j}|1 \leq i \leq n, j\in \mathbb{Z}/3]$, and its  $E_1$-term can be described as
$$ \widetilde{E_1}^{\ast, \ast, \ast,  *}(n)=E[h_{n, j}|j\in \mathbb{Z}/3]\otimes
 H^{\ast}E[h_{i, j}|1 \leq i \leq n-1, j\in \mathbb{Z}/3].$$
The differential is given by
$$\delta_r: \widetilde{E}^{s, t, M, k}_{r}(n) \longrightarrow \widetilde{E}^{s+1, t, M-1, k-r}_{r}(n),$$
 and the first differential is expressed as
$$\delta_1(h_{n,j_1}h_{n, j_2}\cdots  h_{n, j_{k}}x)= \sum\limits_{i=1}^{k}(-1)^{i-1}h_{n, j_1}h_{n, j_2} \cdots d(h_{n, j_i})\cdots h_{n, j_{k}}x,$$
where $x$ is a cohomology class in $H^*E[h_{i, j}|1 \leq i \leq n-1, j\in \mathbb{Z}/3]$.
\end{theorem}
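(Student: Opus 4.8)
The plan is to prove this exactly as the spectral sequence of a bounded increasing filtration of a cochain complex, the only nonformal ingredient being the explicit first May differential supplied by the preceding theorem. Here the complex is $(F(n),d)=\bigl(E[h_{i,j}\mid 1\leq i\leq n,\ j\in\mathbb{Z}/3],\,d\bigr)$ with $d(h_{s,j})=-\sum_{i=1}^{s-1}h_{i,j}h_{s-i,j+i}$ for $s\leq 3$, and the filtration is the one just displayed, $F^k(n)=\bigoplus_{i\leq k}E^i(n)\otimes E[h_{i,j}\mid 1\leq i\leq n-1]$, which has only the finitely many steps $0\subseteq F^0(n)\subseteq F^1(n)\subseteq F^2(n)\subseteq F^3(n)=F(n)$.

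The first step is to verify that each $F^k(n)$ is $d$-stable. The key point is that the differential sends the top generators into the lower-index exterior subalgebra: every monomial $h_{i,j}h_{n-i,j+i}$ occurring in $d(h_{n,j})$ has both first indices strictly less than $n$, so $d(h_{n,j})\in E[h_{i,j}\mid 1\leq i\leq n-1]=F^0(n)$. Feeding this into the Leibniz rule $d(ab)=(da)b+(-1)^{\deg a}a(db)$, an element of $E^k(n)\otimes E[h_{i,j}\mid i\leq n-1]$ is carried to a sum of a term in which one factor $h_{n,j_\ell}$ has been replaced by $d(h_{n,j_\ell})\in F^0(n)$ — lowering the $E^{\bullet}(n)$-degree by one — and a term in which the lower-index factor has been differentiated, which preserves $k$; both lie in $F^k(n)$. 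Hence the spectral sequence $\{\widetilde E_r(n),\delta_r\}$ exists, and since the filtration is bounded it converges to $H^*(F(n))$.

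The second step is to read off $\widetilde E_0$, $\widetilde E_1$ and $\delta_1$. By construction $\widetilde E_0^{\ast,\ast,\ast,k}=F^k(n)/F^{k-1}(n)\cong E^k(n)\otimes E[h_{i,j}\mid 1\leq i\leq n-1]$, and the Leibniz analysis shows that the filtration-preserving part $d_0$ of $d$ differentiates only the lower-index factor (up to the sign $(-1)^k$), so $\widetilde E_1^{\ast,\ast,\ast,k}(n)=E^k(n)\otimes H^*E[h_{i,j}\mid 1\leq i\leq n-1]$; summing over $k$ with $\bigoplus_{k\geq 0}E^k(n)=E[h_{n,j}\mid j\in\mathbb{Z}/3]$ gives the asserted $E_1$-term. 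For $\delta_1$ one picks a cocycle representative $x\in E[h_{i,j}\mid i\leq n-1]$ and applies Leibniz: modulo $F^{k-2}(n)$ and modulo the term $h_{n,j_1}\cdots h_{n,j_k}\,dx$ (which dies in $\widetilde E_1$), one gets $d(h_{n,j_1}\cdots h_{n,j_k}x)=\sum_{i=1}^k(-1)^{i-1}h_{n,j_1}\cdots d(h_{n,j_i})\cdots h_{n,j_k}x$, and since each $d(h_{n,j_i})$ has homological degree $2$ it is central, which is exactly the stated formula. Finally, the indexing of $\delta_r$ is pure bookkeeping: $d$, being the restriction of the first May differential, raises $s$ by $1$, preserves the internal degree $t$, and drops the May filtration $M$ by exactly $1$ (using $M(h_{1,j})=1$, $M(h_{2,j})=3$, $M(h_{3,j})=5$, every monomial of $d(h_{s,j})$ has $M$-degree $2s-2$), while $\delta_r$ lowers the filtration degree $k$ by $r$ by the general formalism of a filtered complex.

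The only genuinely substantive point is the $d$-stability of the $F^k(n)$ in step one — i.e.\ making honest the assertion that the top generators differentiate into the lower exterior subalgebra — since everything afterwards is the mechanical spectral-sequence-of-a-filtered-complex formalism; the remaining care needed is keeping the four gradings $(s,t,M,k)$ aligned and fixing the Koszul signs in $\delta_1$ compatibly with the cobar differential of $S(3)$. The argument is purely formal and uses no hypothesis on $p$ beyond what already went into identifying the $E_2$-page of the May spectral sequence, and it applies verbatim for $n=2$ (where the lower factor $E[h_{1,j}]$ carries the zero differential, so $\widetilde E_1(2)=E[h_{2,j}]\otimes E[h_{1,j}]$) and for $n=3$ (where the lower factor is $(F(2),d)$, so $\widetilde E_1(3)=E[h_{3,j}]\otimes H^*(F(2))$), the $n=3$ case feeding on the output of the $n=2$ case.
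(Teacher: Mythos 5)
Your proposal is correct, and it is the standard filtered-complex argument; the paper itself gives no proof of this statement but simply imports it from Ravenel's paper \cite{Ra1}, whose (1.10) Theorem is established in exactly this way. The one point that genuinely needs checking --- that $d(h_{n,j})$ lands in $E[h_{i,j}\mid i\leq n-1]=F^0(n)$, so that each $F^k(n)$ is a subcomplex and the induced $d_0$ only differentiates the lower-index factor --- is the point you isolate and verify, and your sign and degree bookkeeping (in particular $M(d(h_{s,j}))=2s-2$ and the centrality of the degree-$2$ element $d(h_{n,j_i})$) is accurate.
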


\section{The cohomology ring of Morava stabilizer algebra S(3)}

In this section we recompute the cohomology of $S(3)$ at prime $p \geq 7$, with the add of the May filtration given in definition \cref{MayFLT}. First we consider the
differential graded algebra
$$ F(3)= E[h_{i, j}| 1  \leq i \leq 3, j \in \mathbb{Z}/3], $$
whose differentials defined by
\begin{equation}
 d_1(h_{i, j})= - \sum\limits_{1 \leq k \leq i}h_{k, j}h_{i-k, j+k}
\end{equation}
and
$$ d_1(xy)=d_1(x)y +(-1)^{s}xd_1(y) $$
for any monomials $x$, $y$ and $s$ denotes the homological dimension of $x$.
To calculate the cohomology of   $F(3)= E[h_{i, j}| 1  \leq i \leq 3, j \in \mathbb{Z}/3]$,
we will inductively calculate the cohomology of $F(n)$ for $n=1, 2, 3$ as it
is indicated by Theorem \cref{DFS(3)}.

First notice that
$H_*E[h_{1,i}|i\in\mathbb{Z}/3]=E[h_{1,i}|i\in\mathbb{Z}/3]$
and we have the spectral sequence
\[
\widetilde{E}_1^{*,*,*,*}(2)=E[h_{2,i}|i\in\mathbb{Z}/3]\otimes H_*E[h_{1,i}]\Rightarrow H_*E[h_{2,i}, h_{1,i}|i\in\mathbb{Z}/3].
\]

From this spectral sequence one can easily get the generators of $H^*E[h_{2, i}, h_{1, i} |i \in \mathbb{Z}/3]$, they
are listed as follows:
\begin{enumerate}[\notag]
\item  dim 0: 1;
\item  dim 1: $h_{1, i}$;
\item  dim 2: $g_i\triangleq  h_{2,i}h_{1,i}$, \quad $k_i \triangleq  h_{2,i}h_{1,i+1}$, \quad  $e_{3,i} \triangleq  h_{1, i}h_{2,i+1}+h_{2, i}h_{1,i+2}$,
            \quad $\left(\sum_i e_{3, i}=  0\right)$ ;
\item  dim 3: $c_i \triangleq h_{2,i}h_{2, i+1}h_{1, i}+h_{2,i+2}h_{2, i}h_{1, i+1}$, \quad $ h_{2,i}h_{2, i+1}h_{1, i+1}$, \quad $g_ih_{1,i+1}$, \quad $ e_{3, i}h_{1, i}$;
\item  dim 4: $ e_{3, i+1}g_{i}$,\quad $ e_{3, i}k_i$, \quad $e_{3,i}^2$, \quad  $\left(\sum_i e_{3,i}^2\simeq 0\right)$;
\item  dim 5:  $e_{3, i}c_i$;
\item  dim 6:  $e_{3, i}^2e_{3, i+1} = -2h_{2,i}h_{2,i+1}h_{2,i+2}h_{1,i}h_{1,i+1}h_{1,i+2}$ \quad $(e^2_{3,i}e_{3,i+1}=e_{3, i+1}^2e_{3, i+2})$.
\end{enumerate}
where $i\in\mathbb{Z}/3$. We also list the product relations with $e_{3,i}$ in $H^*E[h_{2,i}, h_{1,i}|i\in \mathbb{Z}/3]$
which will be used in computing $H^*E[h_{3,i}, h_{2,i}, h_{1,i}|i\in\mathbb{Z}/3]$ by the spectral sequence given
in Theorem \cref{DFS(3)}:
\[
\begin{array}{c}
\mbox{Table 3.1  \ \ Product relations with $e_{3,i}$}\\
\begin{tabular}{|l|l|l|l|}
\hline
\multicolumn{1}{|c|}{dimension} &
\multicolumn{1}{c|}{relations}  &
  &  \\
\hline
\hline
    dim 3: & $e_{3, i+1} \cdot  h_{1, i} \simeq e_{3, i}h_{1, i}$, & $e_{3, i+2}\cdot  h_{1, i} \simeq -2e_{3, i}h_{1, i};$ & \\
  & & &\\
    dim 4: & $e_{3, i} \cdot e_{3, i+1} \simeq e_{3, i+2}^2,$ &  $e_{3, i}\cdot g_i =0,$  & $e_{3, i+2}\cdot g_{i} = -e_{3, i+1} g_{i},$  \\
           & $e_{3, i+1} \cdot k_{i} = -e_{3, i}k_i,$ &  $e_{3, i+2} \cdot k_{i} = 0;$ & \\
  & & &\\
    dim 5: & $e_{3, i}\cdot e_{3, i}h_{1, i} =0,$ & $e_{3, i+1}\cdot e_{3, i}h_{1, i} \simeq 0,$  & $e_{3, i+2}\cdot e_{3, i}h_{1, i} =0,$ \\
           & $e_{3, i}\cdot g_ih_{1, i+1} =0,$ &  $e_{3, i+1}\cdot g_ih_{1,i+1} = 0,$  & $e_{3, i+2}\cdot g_ih_{1,i+1} =0,$ \\
           & $e_{3, i}\cdot h_{2,i}h_{2, i+1}h_{1, i+1} =0,$ & $e_{3, i+1}\cdot h_{2,i}h_{2, i+1}h_{1, i+1} = 0,$  & $e_{3, i+2}\cdot h_{2,i}h_{2, i+1}h_{1, i+1} = 0,$\\
           & $e_{3, i+1}\cdot c_{i} =-2e_{3, i}c_{i},$ & $e_{3, i+2}\cdot c_{i} = e_{3, i}c_{i};$ & \\
  & & & \\
    dim 6: & $e_{3, i}\cdot e_{3,i}^2 = 0,$    & $e_{3, i}\cdot e_{3,i+1}^2 = -e_{3, i+1}^2e_{3, i+2},$ & $e_{3, i} \cdot e_{3, i+1}g_{i} = 0,$  \\
           & $e_{3, i+1}\cdot e_{3, i+1}g_{i} = 0,$ & $e_{3, i+2}\cdot e_{3, i+1}g_{i} = 0,$ & $e_{3, i}\cdot e_{3, i}k_i = 0,$ \\
           & $e_{3, i+1}\cdot e_{3, i}k_i = 0,$     & $e_{3, i+2} \cdot e_{3, i}k_i = 0.$ & \\
\hline
\end{tabular}
\end{array}
\]

Now we  calculate  $H^*E[h_{3, i}, h_{2,i}, h_{1,i}|i\in\mathbb{Z}/3]$. From Theorem \cref{DFS(3)} we have the spectral sequence
\[ \widetilde{E}_1^{*,*,*,*}(3)=E[h_{3,i}|i\in\mathbb{Z}/3]\otimes H^*E[h_{2,i}, h_{1, i}]\Rightarrow H^*E[h_{3, i}, h_{2,i}, h_{1,i}|i\in\mathbb{Z}/3], \]
with the first differential
$$\delta_1:  \widetilde{E}_1^{s, t, M, k}(3) \longrightarrow \widetilde{E}_1^{s+1, t, M-1, k-1}(3).$$
To calculate the $E_2$-term, we denote the generators $h_{3,i}h_{3,j}\in E^2[h_{3,i}|i\in\mathbb{Z}/3]$ by $h_{3,i}h_{3,i+1}$, $i\in \mathbb{Z}/3$
and denote $h_{3,0}h_{3,1}h_{3,2}\in E^3[h_{3,i}|i\in\mathbb{Z}/3]$ by $h_{3,i}h_{3,i+1}h_{3,i+2}$,
$h_{3,i}h_{3,i+1}h_{3,i+2}=h_{3,i+1}h_{3,i+2}h_{3,i+3}$. Then
\begin{align}
\delta_1(h_{3,i}) = & -e_{3,i} \notag\\
\delta_1(h_{3,i}h_{3,i+1})  = & -h_{3,i}e_{3,i}  - (h_{3,i+1}e_{3,i} + h_{3,i}e_{3,i+2}) \\
\delta_1(h_{3,i}h_{3,i+1}h_{3,i+2}) = & -\sum_i h_{3,i}h_{3,i+1}e_{3,i+2}, \notag
\end{align}
and the generators in $E_1$-term can be written as one of forms $x$, $h_{3, i}x$, $h_{3, i+1}x$, $h_{3, i+2}x$, $h_{3, i}h_{3, i+1}x$, $h_{3, i+1}h_{3, i+2}x$, $h_{3, i+2}h_{3, i}x$ and $h_{3, i}h_{3, i+1}h_{3, i+2}x$, where $x$ is some generator of $H^*E[h_{1, i}, h_{2, i}]$.

By (3.2) and the product relations with $e_{3,i}$ in Table 3.1 we can compute the first differentials then get the generators of the $E_2$-term. And each generator is the lead term of  a cocycle in $E[h_{3, i}, h_{2, i}, h_{1, i} | i \in \mathbb{Z}/3]$. All of the cocycles determined by the generators of the $E_2$-term are the generators of the complex. With isomorphic classes  and base change we get  the generators of $H^*E[h_{3, i}, h_{2, i}, h_{1, i} | i \in \mathbb{Z}/3]$ as follows.

\begin{align*}
 & dim0: &&1, &&\qquad  \\
  & dim1: && \rho \in E_2^{1, 0, 5 }, && \qquad h_{1, i} \in E_2^{1, qp^i, 1},\\
  &dim2: && \rho h_{1, i}, \,\,  e_{4, i} \in E_2^{2, qp^i, 6}, &&\qquad g_i \in E_2^{2, q(p^{i+1} + 2p^i), 4},\\
  &  && k_i \in E_2^{2, q(2p^{i+1} + p^i), 4},&\\
 &dim3: &&  \rho e_{4, i} \in E_2^{3, qp^i, 11}, &&   \rho g_i, \,\,  \mu_i \in E_2^{3, q(p^{i+1} + 2p^i), 9},\\
 &  && \rho k_{i+1}, \, \,  \nu_i  \in E_2^{3, q(2p^{i+2} + p^{i+1}), 9}, && \xi \in E_2^{3, 0, 9},\\
 & &&  e_{4, i}h_{1, i} \in E_2^{3, 2qp^{i}, 7}, && e_{4, i}h_{1, i+1} \in E_2^{3, q(p^{i+1} + p^i), 7},\\
 & & & g_ih_{1,i+1} \in E_2^{3, 2q(p^{i+1} + p^i), 5};\\
 & dim4: && \rho \mu_i \in E_2^{4, q(p^{i+1} + 2p^i), 14}, &&\rho \nu_i \in E_2^{4, q(2p^{i+2} + p^{i+1}), 14},\\
 &       && \rho \xi \in E_2^{4, 0, 14},                         & &\rho e_{4, i}h_{1, i+1}, \, \, e_{4, i}e_{4, i+1} \in E_2^{4, q(p^{i+1} + p^i), 12},\\
  &      & &\rho e_{4, i}h_{1, i}, \, \, e_{4, i}^2, \, \, \theta_i\in E_2^{4, 2qp^{i}, 12}, && \rho g_i h_{1,i+1}, \, \, e_{4, i}k_i, \, \, e_{4, i+1}g_{i} \in E_2^{4, 2q(p^{i+1} + p^i), 10},\\
 & &&e_{4, i}g_{i+1} \in E_2^{4, qp^{i+1}, 10};\\
  & & &\\
 & dim5: &&\rho e_{4, i} e_{4, i+1} \in E_2^{5, q(p^{i+1} + p^i), 17}, &&\rho \theta_i, \,\, \rho e_{4, i}^2, \,\, \eta_i \in E_2^{5, 2qp^{i}, 17},\\
 &  &&e_{4, i+1} \mu_{i}, \, \, \rho e_{4, i+1}g_{i}, \, \, \rho e_{4, i}k_i \in E_2^{5, 2q(p^{i+1} + p^i), 15}, &&e_{4, i} \nu_i, \, \, \rho e_{4, i+1}g_{i+2} \in E_2^{5, qp^{i+2}, 15}, \\
 &  &&e_{4, i}^2h_{1, i+1} \in E_2^{5, q(p^{i+1} + 2p^i), 13}, && e_{4, i}^2h_{1, i+2} \in E_2^{5, q(  2p^i + p^{i+2}), 13},\\
 & &&e_{4, i}e_{4, i+1}h_{1, i+2}=e_{4, 0}e_{4, 1}h_{1, 2}  \in E_2^{5, 0, 13}; &&\\
 & dim6: &&\rho \eta_i \in E_2^{6, 2qp^{i}, 22}, &&\rho e_{4, i} \mu_{i+2} \in E_2^{6, 2q( p^i + p^{i+2} ), 20},\\
 &       &&\rho e_{4, i} \nu_i \in E_2^{6, qp^i, 20}, && \rho e_{4, i}e_{4, i+1}h_{1, i+2} \in E_2^{6, 0, 18},\\
 &       &&\rho e_{4, i}^2h_{1, i+1}, \, \, e_{4, i}^2e_{4, i+1} \in E_2^{6, q(p^{i+1} + 2p^i), 18}, &&e_{4, i}^2e_{4, i+2}, \, \, \rho e_{4, i}^2h_{1, i+2}  \in E_2^{6, q( 2p^i + p^{i+2} ), 18},\\
 &        && e_{4, i}e_{4, i+1} g_{i+2} \in E_2^{6, q(  p^i + p^{i+2}), 16};\\
  & dim7: && \rho e_{4, i}^2e_{4, i+2} \in E_2^{7, q(2p^{i} + p^{i+2}), 23}, && \rho e_{4, i}^2e_{4, i+1} \in E_2^{7, q(p^{i+1} + 2p^i), 23},\\
 &        &&e_{4, i+1}e_{4, i+2} \mu_{i}, \, \, \rho e_{4, i}e_{4, i+1} g_{i+2} \in E_2^{7, q(p^{i+1} + p^i), 21};\\
 & dim8:   && \rho e_{4, i}e_{4, i+1} \mu_{i+2}\in E_2^{8, q(p^i + p^{i+2}), 26}, &&e_{4, i}^2e_{4, i+2}g_{i+1}=e_{4, 0}^2e_{4, 2}g_{1} \in E_2^{8, 0, 22};\\
 & dim9:  && \rho e_{4, i}^2e_{4, i+2}g_{i+1} \in E_2^{9, 0, 27}.
\end{align*}\\
 where $ \rho:=\sum h_{3, i}$, $e_{4, i}=h_{3, i}h_{1, i} + h_{2, i}h_{2, i+2} + h_{1, i}h_{3, i+1}$, $\xi=\sum h_{3, i+1}e_{3, i} + \sum h_{2, i}h_{2, i+1}h_{2, i+2}$, $\mu_i=h_{3,i}h_{2,i}h_{1,i}$, $\nu_i=h_{3,i}h_{2,i+1}h_{1,i+2}$, $\theta_i=h_{3, i}h_{2, i+2}h_{2, i}h_{1, i} $, $\eta_i= h_{3, i}h_{3, i+1}h_{2, i+2}h_{2, i}h_{1, i} $.

From the May filtration of the generators in $E_2^{*, *, * } = H^*E[h_{3, i}, h_{2, i}, h_{1, i}]$, one can easily see that the May spectral sequence $\{E_r^{s, t, M}, d_r \}\Rightarrow H^*S(3)$ collapses at $E_2$-term for each generator $h \in E_2^{s, *, M} \xrightarrow{d_r} E_2^{s+1, *, M-r}=0$. Thus we get the  $ \mathbb{Z}/p$-module  $H^*S(3)$.

\begin{prop} [\cite{Ya} Theorem 4.2]
 $H^*S(3)$  is isomorphic to $E[ \rho] \otimes M$, where $M$ is a $ \mathbb{Z}/p$-module generated by the following listed elements:

\begin{enumerate}[\notag]
\item dim0:      1; \\

\item  dim1: $h_{1, i}$; \\

\item dim2: $e_{4, i}$,\quad
            $g_i$, \quad
            $k_i$;\\

 \item dim3:   $e_{4, i}h_{1, i}$,\quad
                $e_{4, i}h_{1, i+1}$, \quad
                $ g_ih_{1,i+1}$,\quad
                $\mu_i$,\quad
                $\nu_i$, \quad
                $ \xi $;\\

\item dim4: $e_{4, i}^2$, \quad
           $e_{4, i}e_{4, i+1}$,\quad
           $e_{4, i}g_{i+1}$, \quad
           $e_{4, i}g_{i+2}$, \quad
           $e_{4, i}k_i$, \quad
            $\theta_i$;\\

 \item dim5:  $e_{4, i}^2h_{1, i+1}$,\quad
              $e_{4, i}^2h_{1, i+2}$, \quad
              $e_{4, i}e_{4, i+1}h_{1, i+2}$,\quad
              $e_{4, i} \mu_{i+2}$, \quad
              $e_{4, i} \nu_i$, \quad
               $\eta_i$, \, $(e_{4, i}e_{4, i+1}h_{1, i+2} = e_{4, i+1}e_{4, i+2}h_{1, i} )$;\\

\item dim6:  $e_{4, i}^2e_{4, i+1}$, \quad
             $ e_{4, i}^2e_{4, i+2}$, \quad
              $e_{4, i}e_{4, i+1}g_{i+2}$; \\

\item dim7:   $e_{4, i}e_{4, i+1} \mu_{i+2}$; \\

\item dim8:    $e_{4, i}^2e_{4, i+2}g_{i+1}$,  \quad $(e_{4, i}^2e_{4, i+2}g_{i+1}=e_{4, i+1}^2e_{4, i}g_{i+2})$.\\

\end{enumerate}

\end{prop}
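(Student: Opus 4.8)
The plan is to spell out the two reductions that the section has already set up and then check that the May spectral sequence degenerates. By the theorem identifying the May $E_2$-page we have $E_2^{s,t,M}\cong H^{s,t,M}(F(3))$, where $F(3)=E[h_{3,j},h_{2,j},h_{1,j}\mid j\in\mathbb Z/3]$ is the differential graded algebra with $d_1(h_{i,j})=-\sum_{1\le k\le i-1}h_{k,j}h_{i-k,j+k}$ together with the Leibniz rule. So the first task is to compute the $\mathbb Z/p$-module $H^*(F(3))$ explicitly, and I would do this in the two stages prescribed by Theorem~\cref{DFS(3)}: first pass from $H^*E[h_{1,i}]$ to $H^*E[h_{2,i},h_{1,i}]$, then from $H^*E[h_{2,i},h_{1,i}]$ to $H^*E[h_{3,i},h_{2,i},h_{1,i}]$.

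In the first stage one has $H^*E[h_{1,i}\mid i\in\mathbb Z/3]=E[h_{1,i}]$, and the filtration spectral sequence of Theorem~\cref{DFS(3)} with $n=2$ has $\widetilde{E}_1(2)=E[h_{2,i}\mid i\in\mathbb Z/3]\otimes E[h_{1,i}\mid i\in\mathbb Z/3]$ with $\delta_1(h_{2,i})=-h_{1,i}h_{1,i+1}$. This is a finite homology computation over a small exterior algebra; carrying it out produces $g_i=h_{2,i}h_{1,i}$, $k_i=h_{2,i}h_{1,i+1}$, $e_{3,i}=h_{1,i}h_{2,i+1}+h_{2,i}h_{1,i+2}$, the higher classes and the relations listed above for $H^*E[h_{2,i},h_{1,i}]$, and---what is really needed in the next stage---the multiplication table of these classes against the $e_{3,i}$, namely Table~3.1.

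In the second stage the filtration spectral sequence of Theorem~\cref{DFS(3)} with $n=3$ has $\widetilde{E}_1(3)=E[h_{3,i}\mid i\in\mathbb Z/3]\otimes H^*E[h_{2,i},h_{1,i}]$, with $\delta_1(h_{3,i})=-e_{3,i}$ and the derivation formulas (3.2) on $h_{3,i}h_{3,i+1}$ and $h_{3,0}h_{3,1}h_{3,2}$. Using (3.2) and Table~3.1 one computes $\delta_1$ on every class of the forms $x$, $h_{3,i}x$, $h_{3,i}h_{3,i+1}x$ and $h_{3,0}h_{3,1}h_{3,2}x$ with $x$ a generator of $H^*E[h_{2,i},h_{1,i}]$, reads off $\widetilde{E}_2(3)$, and checks that each of its classes lifts to an actual cocycle of $F(3)$, so that $\widetilde{E}_2(3)=\widetilde{E}_\infty(3)$. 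A base change presents these cocycles symmetrically as $\rho=\sum_i h_{3,i}$, $e_{4,i}=h_{3,i}h_{1,i}+h_{2,i}h_{2,i+2}+h_{1,i}h_{3,i+1}$, $\mu_i=h_{3,i}h_{2,i}h_{1,i}$, $\nu_i=h_{3,i}h_{2,i+1}h_{1,i+2}$, $\xi=\sum_i h_{3,i+1}e_{3,i}+\sum_i h_{2,i}h_{2,i+1}h_{2,i+2}$, $\theta_i=h_{3,i}h_{2,i+2}h_{2,i}h_{1,i}$, $\eta_i=h_{3,i}h_{3,i+1}h_{2,i+2}h_{2,i}h_{1,i}$, which yields exactly the set of $\mathbb Z/p$-module generators of $E_2^{*,*,*}=H^*(F(3))$ displayed above, each with its homological degree, internal degree and May filtration. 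Since $h_{3,i}^2=0$ and $h_{3,i}h_{3,j}=-h_{3,j}h_{3,i}$ give $\rho^2=0$, and since each displayed generator lies in $M$ or equals $\rho$ times an element of $M$, we obtain $E_2^{*,*,*}\cong E[\rho]\otimes M$ as $\mathbb Z/p$-modules.

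It remains to see that the May spectral sequence collapses at $E_2$. The differentials satisfy $d_r\colon E_r^{s,t,M}\to E_r^{s+1,t,M-r}$, so for $r\ge 2$ they preserve the internal degree $t$, raise $s$ by one and drop the May filtration by at least $2$. A finite inspection of the tabulated degrees shows that for every generator of $E_2^{*,*,*}$ in tridegree $(s,t,M)$ the group $E_2^{s+1,t,M'}$ is zero for all $M'\le M-2$; hence every $d_r$ with $r\ge 2$ vanishes and $E_2^{*,*,*}=E_\infty^{*,*,*}$. As we work over the field $\mathbb Z/p$, the May-filtered module $H^*S(3)$ is (non-canonically) isomorphic to its associated graded $E_\infty^{*,*,*}$, so $H^*S(3)\cong E[\rho]\otimes M$ as $\mathbb Z/p$-modules, which is the assertion. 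The main obstacle is the bookkeeping in the second stage: confirming that the computation of $\widetilde{E}_*(3)$ is complete---that the displayed list is precisely a set of $\mathbb Z/p$-module generators of $H^*(F(3))$, with no class omitted, no surviving higher differential $\delta_r$, and exactly the stated internal relations---because $H^*E[h_{2,i},h_{1,i}]$ already carries many generators and the product relations with the $e_{3,i}$ intervene at almost every step. Once this list is secured, the collapse of the May spectral sequence is a pure comparison of degrees.
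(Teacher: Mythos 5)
Your proposal is correct and follows essentially the same route as the paper: the two-stage reduction via the filtration spectral sequences of Theorem~2.4 (first computing $H^*E[h_{2,i},h_{1,i}]$ together with the $e_{3,i}$-multiplication table, then $H^*E[h_{3,i},h_{2,i},h_{1,i}]$ via (3.2)), the symmetric base change to $\rho$, $e_{4,i}$, $\mu_i$, $\nu_i$, $\xi$, $\theta_i$, $\eta_i$, and the collapse of the May spectral sequence at $E_2$ by inspection of the May filtrations of the listed generators. The only content either argument defers is the finite (but lengthy) bookkeeping of the $\delta_1$-computation in the second stage, which you correctly identify as the main burden.
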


  Also by the relation among cohomology degrees, inner degrees and May filtrations, we know that as a ring, $H^*S(3)\cong H^*E[h_{3, i}, h_{2, i}, h_{1, i}] $.
  Therefore, we are able to determine the ring structure of $H^*S(3)$.

Summarizing the results above, we have the following
\begin{theorem}[\cite{Ya} Proposition 4.3, Theorem 4.4]
 The $\mathbb{Z}/p$-algebra $H^*S(3)$ is generated by the elements $\{  h_{1,i}, g_i, k_i, e_{4, i}, \mu_i, \nu_i, \xi, \theta_i, \eta_i, \rho |i \in \mathbb{Z}/3 \}$ satisfying the product relations given in the appendix. Its Poincar{\'e} series is $(1+t)^3(1 + t + 6t^2 + 3t^3 + 6t^4 + t^5 + t^6)$.
\end{theorem}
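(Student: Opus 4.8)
The plan is to realize the theorem as the endpoint of the May spectral sequence $\{E_r^{s,t,M}S(3),d_r\}\Rightarrow H^*S(3)$ of Section 2, and then to promote the resulting $\mathbb{Z}/p$-module answer to a $\mathbb{Z}/p$-algebra answer by a tri-degree count. First I would invoke the structure of the May $E_1$-term, $E_1=E[h_{i,j}\mid i\ge 1,\,j\in\mathbb{Z}/3]\otimes P[b_{i,j}\mid i\ge 1,\,j\in\mathbb{Z}/3]$, together with the first differential $(2.7)$, which sends $h_{s,j}$ to $b_{s-3,j+2}$ for $s>3$ and to $-\sum_{i=1}^{s-1}h_{i,j}h_{s-i,j+i}$ for $s\le 3$. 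Consequently every $b_{i,j}$ is a $d_1$-boundary, while every $h_{s,j}$ with $s>3$ is a $d_1$-cycle killing one, so the May $E_2$-page is the cohomology of the finite exterior differential graded algebra $E[h_{3,j},h_{2,j},h_{1,j}\mid j\in\mathbb{Z}/3]$ with the differential $(3.1)$. The core computation is that of $H^*E[h_{3,j},h_{2,j},h_{1,j}]$; following Theorem \cref{DFS(3)} I would run it through two nested filtration spectral sequences — first passing from $H^*E[h_{1,i}]=E[h_{1,i}]$ to $H^*E[h_{2,i},h_{1,i}]$, whose generators and all products with the class $e_{3,i}$ are the data assembled in Table 3.1, and then from $H^*E[h_{2,i},h_{1,i}]$ to $H^*E[h_{3,i},h_{2,i},h_{1,i}]$, evaluating $\delta_1$ by means of $(3.2)$ and Table 3.1 — which yields the tri-graded generator list displayed in Section 3, each generator lifted to an honest cocycle of $F(3)$ after the indicated base changes.

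Next I would check that the May spectral sequence collapses at $E_2$: reading the tri-degrees $(s,t,M)$ off that list, for each generator $x\in E_2^{s,\ast,M}$ the only possible target $E_2^{s+1,\ast,M-r}$ of $d_r$ with $r\ge 2$ is zero, because no listed class has cohomological degree $s+1$, the matching internal degree, and May filtration $M-r$. Hence the associated graded of $H^*S(3)$ for the May filtration is $E[\rho]\otimes M$, with $M$ the $\mathbb{Z}/p$-module of the Proposition, which already gives the module statement. To pass to the ring structure I would rule out hidden multiplicative extensions: the spectral sequence is multiplicative, so for generators $x,y$ the product $xy$ in $H^*S(3)$ equals the associated-graded product modulo classes of strictly smaller May filtration in the same $(s,t)$-bidegree, and I would run through the few bidegrees that contain more than one generator, using the sparseness of the tri-grading and, where that is not enough, a direct computation of the cobar representatives in $C^{\ast,\ast}S(3)$, to see that no such correction term survives. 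This gives $H^*S(3)\cong H^*E[h_{3,i},h_{2,i},h_{1,i}]$ as $\mathbb{Z}/p$-algebras; since every element of the Section-3 list is manifestly a monomial in $h_{1,i},g_i,k_i,e_{4,i},\mu_i,\nu_i,\xi,\theta_i,\eta_i,\rho$, those are generators, and the defining relations are precisely those that cut the free graded-commutative algebra on them down to the module of the Proposition; these I would record in the appendix.

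The Poincaré series is then bookkeeping: $E[\rho]$ contributes the factor $(1+t)$, and the basis of $M$ listed in the Proposition has $1,3,9,16,18,16,9,3,1$ elements in dimensions $0,\dots,8$, i.e. Poincaré series $(1+t)^2(1+t+6t^2+3t^3+6t^4+t^5+t^6)$, so the product is $(1+t)^3(1+t+6t^2+3t^3+6t^4+t^5+t^6)$. I expect the main obstacle to be the middle stage: the computation of $H^*E[h_{3,i},h_{2,i},h_{1,i}]$ is sizeable, and lifting the $E_\infty$-generators to cocycles, pinning down the precise relations, and excluding multiplicative extensions in the May spectral sequence all require carefully tracking the three gradings $(s,t,M)$ simultaneously. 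It is this ring-level information, rather than the underlying $\mathbb{Z}/p$-module, that the argument in Section 4 will use.
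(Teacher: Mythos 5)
Your proposal follows essentially the same route as the paper: the May filtration of Definition \cref{MayFLT}, the identification of the $E_2$-page with $H^*E[h_{3,j},h_{2,j},h_{1,j}]$ after $d_1$ cancels the $b_{i,j}$ against the $h_{s,j}$ with $s>3$, the nested filtration spectral sequences of Theorem \cref{DFS(3)} together with Table 3.1, collapse at $E_2$ and absence of hidden extensions by inspection of the tri-degrees, and the final count $1,3,9,16,18,16,9,3,1$ for the module $M$ giving the stated Poincar\'e series. This matches the paper's argument in both structure and detail, so no further comparison is needed.
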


\bigskip

\section{A nontrivial product in stable homotopy groups on spheres}
\bigskip

In this section, we  turn to  the nontrivial products in stable homotopy groups on spheres as an application of the algebraic structure of the cohomology of the Morava stabilizer algebra $S(3)$ in the Adams-Novikov spectral sequence.

The canonical homomorphism $BP_* \to v_3^{-1}BP_*/I_3$ induces a homomorphism
$$\varphi: \operatorname{Ext}_{BP_{\ast}BP}(BP_{\ast}, BP_{\ast}) \to \operatorname{Ext}_{BP_*BP}^{*,*}(BP_*, v_3^{-1}BP_*/I_3)\cong
\operatorname{Ext}_{\Sigma(3)}(K(3)_*, K(3)_*).
$$
Specifically, by  \cite{Mi2},  $\varphi$ is induced by  the reduction map  from cobar complex $C^*_{BP_*BP}BP_*$ to complex $C_{\Sigma(3)}$, where $C^s_{BP_*BP}BP_* = \overline{BP_*BP}^{\otimes s} \bigotimes_{BP_*} BP_*$,  $\overline{BP_*BP}=ker\varepsilon$, and $\varepsilon$ is the counit of Hopf algebroid $(BP_*, BP_*BP )$.

 In the cobar complex $C_{\Sigma(3)}$, we have $d(v_3)=0$. In other words, the differential $d$ is $v_3$-linear.
Furthermore, since we have
\begin{eqnarray}
\operatorname{Ext^*}_{\Sigma(3)}(K(3)_*, K(3)_*) = H^*S(3) \otimes_{\mathbb{Z}/p} K(3)_*.
\end{eqnarray}
we may set $v_3=1$ for the sake of simplicity, if we allow ourselves to consider non-homogeneous elements. The $v_3$-linear property of $d$ ensures that the computation won't be any different.

Recall
$$  \varphi (\alpha_1)  =h_{1,0} \quad \text{and} \quad  \varphi(\beta_1) = -b_{1, 0},  $$
which are shown by Ryo Kato and Katsumi Shimomura (\cite{Ka}).  Following their work we have the following:
\begin{lemma} \label{Thom map}
Let $p \geq7$ be a prime number.
\begin{enumerate}
\item For any integers $n \geq 0$ and  $s =\frac{p^{(2i-1)}+1}{p+1}, i \geq 1$, we have
  \begin{equation}
 \varphi (\beta_{sp^n/p^n})=
\begin{cases}
 -b_{1, n}, \qquad i=1, \\
0,  \quad \, \, \, \qquad  i>1
\end{cases}
\end{equation}
where $\beta_{sp^k/j}$ is defined as in \cite{Ra}.

\item  For any integer $ s  >0$,
 $$\varphi(\gamma_s)  =  s(s^2-1)\nu_0 -s(s-1)\rho k_1.$$
\end{enumerate}
\end{lemma}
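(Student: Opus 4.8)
\emph{Strategy.} Both statements are computations with explicit cobar representatives, and the key simplification, already recorded above, is that $\varphi$ is induced by the reduction $C^{*}_{BP_{*}BP}BP_{*}\to C_{\Sigma(3)}$, that the cobar differential of $\Sigma(3)$ is $v_{3}$-linear, and hence that one may set $v_{3}=1$ and work inside the cobar complex of $S(3)$, where $p=v_{1}=v_{2}=0$ and the coproduct is the one in $(2.4)$. In both parts the plan is the same: run the standard Greek-letter construction (as in \cite{Ra}) to produce a cocycle in $C^{*}_{BP_{*}BP}BP_{*}$ representing $\beta_{sp^{n}/p^{n}}$ (resp.\ $\gamma_{s}$), reduce it modulo $I_{3}$, set $v_{3}=1$, and identify the resulting cocycle of $C_{S(3)}$ with a cohomology class using the structure theorem for $H^{*}S(3)$ of Section~3 --- concretely the descriptions $\nu_{i}=h_{3,i}h_{2,i+1}h_{1,i+2}$, $\rho=\sum_{i}h_{3,i}$, $k_{i}=h_{2,i}h_{1,i+1}$ together with the bigraded generator list proved there. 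The hypothesis $p\ge 7$ is used only to keep binomial coefficients well-behaved.

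\emph{Part (2).} I would push $v_{3}^{s}\in BP_{*}/I_{3}$ through the three connecting homomorphisms $\delta_{2},\delta_{1},\delta_{0}$ of the short exact sequences $0\to BP_{*}/I_{k}\xrightarrow{v_{k}}BP_{*}/I_{k}\to BP_{*}/I_{k+1}\to 0$: at each stage one lifts a cocycle to the larger module, applies the cobar differential, checks divisibility by $v_{2}$, $v_{1}$, $p$ in turn, and divides, the only inputs being the congruences for $\eta_{R}(v_{1}),\eta_{R}(v_{2}),\eta_{R}(v_{3})$ and $\Delta(t_{1}),\Delta(t_{2}),\Delta(t_{1}^{p^{j}})$ from Section~2. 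From $\eta_{R}(v_{3})\equiv v_{3}+v_{2}t_{1}^{p^{2}}-v_{2}^{p}t_{1}\pmod{I_{2}}$ one obtains $\delta_{2}(v_{3}^{s})=\big[\sum_{k\ge 1}\binom{s}{k}v_{3}^{s-k}v_{2}^{k-1}(t_{1}^{p^{2}}-v_{2}^{p-1}t_{1})^{k}\big]$; the summand $v_{1}t_{2}^{p}$ of $\eta_{R}(v_{3})$ drives the division by $v_{1}$ at the $\delta_{1}$-stage and creates the factor $t_{2}^{p}$; finally the summand $pt_{3}$ of $\eta_{R}(v_{3})$ together with the $p\,b_{1,j-1}$-terms of $\Delta(t_{1}^{p^{j}})$ drives the division by $p$ at the $\delta_{0}$-stage and creates the factor $t_{3}$. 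After reducing modulo $I_{3}$ and setting $v_{3}=1$, the surviving monomials are of the form $[t_{3}^{p^{i}}\mid t_{2}^{p}\mid t_{1}^{p^{2}}]$ up to coboundary, so $\varphi(\gamma_{s})$ lies in the part of $\operatorname{Ext}^{3}S(3)$ of internal degree $\equiv q(p^{2}-1)\pmod{q(p^{2}+p+1)}$; checking the dimension-$3$ generator list of Section~3 shows this part is exactly two-dimensional, spanned by $\nu_{0}$ and $\rho k_{1}$. Hence $\varphi(\gamma_{s})=A(s)\nu_{0}+B(s)\rho k_{1}$ with $A,B$ polynomial in $s$ of degree $3$ (only $\binom{s}{1},\binom{s}{2},\binom{s}{3}$ survive the $v_{2}$-bookkeeping) and vanishing at $s=0$; collecting the low-order terms of the representative gives $A(s)=s(s^{2}-1)$ and $B(s)=-s(s-1)$. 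The bulk of the work is this last bookkeeping: one must see how the differential acting on a power of $v_{3}$ produces both the leading term $s(s-1)(s-2)\nu_{0}$ and, through the remaining summands of $\eta_{R}(v_{3})$ and through $\Delta(t_{2})$, correction terms that shift the coefficient to $s(s^{2}-1)$ and generate the $-s(s-1)\rho k_{1}$ term.

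\emph{Part (1).} Here one starts from $v_{2}^{sp^{n}}$ and uses the standard representative of $\beta_{i/j}$. Via $\eta_{R}(v_{2})\equiv v_{2}+v_{1}t_{1}^{p}-v_{1}^{p}t_{1}\pmod p$ and Lucas's theorem ($\binom{sp^{n}}{m}\equiv 0\pmod p$ for $0<m<p^{n}$, and $\binom{sp^{n}}{k'p^{n}}\equiv\binom{s}{k'}$) one gets a representative with leading term (after the division by $v_{1}^{p^{n}}$) equal to $s\,v_{2}^{(s-1)p^{n}}(t_{1}^{p^{n+1}}-v_{1}^{(p-1)p^{n}}t_{1}^{p^{n}})$, and then applies $\delta_{0}$. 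For $s=1$ (the case $i=1$ of the lemma) the representative carries no positive power of $v_{2}$, and its reduction modulo $I_{3}$ is driven by $d(t_{1}^{p^{n+1}})=-p\,b_{1,n}$, yielding $\varphi(\beta_{p^{n}/p^{n}})=-b_{1,n}$ --- this extends the Kato--Shimomura computation ($n=0$) to all $n$. For $s=\frac{p^{2i-1}+1}{p+1}\ge p^{2}-p+1>1$ with $i>1$, every term of the representative of $\beta_{sp^{n}/p^{n}}$ carries a positive power of $v_{2}$: the leading term is $-s\,v_{2}^{(s-1)p^{n}}b_{1,n}$, and the Leibniz corrections arising when $d$ hits $v_{2}^{(s-k')p^{n}}$ either retain a positive $v_{2}$-power or carry a factor $\binom{s}{k'}$ with $k'\ge 2$; combining this with the internal-degree constraint (the relevant part of $\operatorname{Ext}^{2}S(3)$ is spanned by $e_{4,m}$ and $\rho h_{1,m}$ with $m\equiv n+2i-1\pmod 3$) and checking that both coefficients vanish gives $\varphi(\beta_{sp^{n}/p^{n}})=0$. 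I expect this vanishing for $i>1$ --- verifying that no term escapes both the $v_{2}$-divisibility and the internal-degree bound --- to be the most delicate point, and it is the one place where the precise shape $s=\frac{p^{2i-1}+1}{p+1}$ of the exponent is actually used.
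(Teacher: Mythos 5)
Your strategy for part (2) is the paper's strategy: push $v_3^s$ through $\delta_2,\delta_1,\delta_0$ using the congruences for $\eta_R(v_1),\eta_R(v_2),\eta_R(v_3)$ and the coproducts from Section~2, reduce modulo $I_3$, set $v_3=1$, and identify the resulting cocycle in $H^3S(3)$. Your added observation --- that the internal degree $\equiv q(2p^2+p)\bmod q(p^2+p+1)$ confines the answer to the two-dimensional span of $\nu_0$ and $\rho k_1$, so only two coefficients need to be found --- is correct (one checks it against the dimension-$3$ generator list) and is a genuine economy the paper does not make explicit. But the step you call ``collecting the low-order terms'' is exactly where the paper's proof lives and where your write-up stops: the raw representative $\delta_0\delta_1\delta_2(v_3^s)$ carries the monomial $t_3\otimes t_2^p\otimes t_1^{p^2}$ with coefficient $s(s-1)(s-2)$, whereas the asserted answer $s(s^2-1)\nu_0-s(s-1)\rho k_1$ requires coefficient $s(s^2-1)-s(s-1)=s^2(s-1)$ on that monomial; the discrepancy is resolved only by adding an explicit list of coboundaries ($d(t_5^p\otimes t_1^{p^2})$, $d(t_2^p\otimes t_4^{p^2})$, $-2d(t_2^pt_3\otimes t_1^{p^2})$, $d(t_2t_3^{p^2}\otimes t_1^p)$, \dots) that simultaneously cancel the terms $b_{2,0}\otimes t_1^{p^2}$, $t_2^p\otimes b_{1,1}$, $b_{1,0}\otimes t_1^{2p^2}$ and feed the $\rho k_1$ component. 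You correctly anticipate that such corrections exist and shift the coefficient, but you do not produce them, so the coefficients are asserted rather than derived; carrying out this cancellation is the entire content of the paper's proof of part (2). For part (1) the routes genuinely differ: the paper disposes of it in one line by citing Ravenel's Lemma~6.4.2, while you reconstruct it from $\eta_R(v_2)$, Lucas's theorem, and the divisions by $v_1^{p^n}$ and $p$. Your argument --- for $s=1$ the class $-b_{1,n}$ survives, and for $s>1$ every surviving monomial retains a positive power of $v_2$ or $v_1$ and hence dies modulo $I_3$ --- is sound and more self-contained, at the cost of redoing a computation already in the literature.
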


\begin{proof}
Part 1 is immediate from the Lemma 6.42 of \cite{Ra}.\\
 Part 2 has been appeared in \cite{Ka}, but we want make it more clear.

  In the cobar complex $C^*_{BP_*BP}BP_* $, by (2.1), (2.2) and (2.3), we get
 $$
   d(v_3^s) \equiv sv_2v_3^{s-1}t_1^{p^2} + \binom{s}{2} v_2^2v_3^{s-2}t_1^{2p^2}+ \binom{s}{3} v_2^3v_3^{s-3}t_1^{3p^2}  \mod {(p, v_1,  v_2^4)},
  $$
   which imply
     $$\delta_2 (v_3^s)   \equiv sv_3^{s-1}t_1^{p^2} + \binom{s}{2} v_2v_3^{s-2}t_1^{2p^2}+ \binom{s}{3} v_2^2v_3^{s-3}t_1^{3p^2} \text{mod $ v_2^3$}.$$

Recall $d(t_1^{p^{n+1}}) = -pb_{1, n}$, and we obtain
     \begin{eqnarray*}
       \delta_1 \delta_2 (v_3^s) & \equiv& s(s-1) v_3^{s-2} t_2^p \otimes t_1^{p^2} + \binom{s}{2}v_3^{s-2}t_1^p \otimes t_1^{2p^2} + s(s-1)(s-2) v_3^{s-3}v_2 t_1^{p^2}t_1^p \otimes t_1^{p^2} \\
      & &+ s\binom{s-1}{2} v_3^{s-3}v_1t_1^{2p^2} \otimes t_1^{p^2} +  s\binom{s-1}{2} v_3^{s-3}v_2t_1^{p^2}t_1^p \otimes t_1^{2p^2} +  s\binom{s-1}{2} v_3^{s-3}v_2t_2^p \otimes t_1^{2p^2} \\
      &&+   s\binom{s-1}{2} v_3^{s-3}v_1t_2^pt_1^p \otimes t_1^{2p^2} \text{mod $ (v_1, v_2)^2$}.
\end{eqnarray*}

 Notice that $ d(v_2)\equiv pt_2 $ mod $(p^2, v_1)$,
 $d(v_3)\equiv pt_3 $ mod $(p^2, v_1, v_2)$,
 and $d(t_2^p)=-t_1^p \otimes t_1^{p^2}+v_1^pb_{1, 1}-pb_{2, 0}$, in the complex $C^*_{BP_*BP}BP_* $, then we have
\begin{align*}
 \delta_0 \delta_1 \delta_2 (v_3^s) \equiv & s(s-1)v_3^{s-2}(-b_{2, 0}t_1^{p^2}+ t_2^pb_{1,1})\\
       & +\frac{s(s-1)}{2}v_3^{s-2}(-b_{1, 0} \otimes t_1^{2p^2} + 2t_1^p \otimes b_{1, 1} (1 \otimes t_1^{p^2} + t_1^{p^2} \otimes 1)) \\
      & + s(s-1)(s-2)v_3^{s-3} t_3 \otimes t_2^p \otimes t_1^{p^2}
       + \cdots  \, \,  \text{mod $(p, v_1, v_2)$.}
 \end{align*}
So \begin{eqnarray*}\varphi (\gamma_s) &=& - \underline{ s(s-1)b_{2, 0}t_1^{p^2}}_{8}
                                        + \underline{s(s-1)t_2^p \otimes b_{1, 1} }_{9}
                                        -\underline{\frac{1}{2}s(s-1)b_{1, 0} \otimes t_1^{2p^2} }_{6}\\
                                  & &  +\underline{s(s-1) t_1^p \otimes b_{1, 1}( 1 \otimes t_1^{p^2} + t_1^{p^2} \otimes 1 )}_{2}+ s(s-1)(s-2) t_3 \otimes t_2^p \otimes t_1^{p^2} + \cdots
\end{eqnarray*}

 But in the cobar complex $C_{k(3)_*K(3)}^* K(3)$ we have:
\begin{eqnarray*}
   d (t_5^p \otimes t_1^{p^2}) &=& \underline{ -t_1^p \otimes t_4^{p^2} \otimes t_1^{p^2} }_{1}
                                 - \underline{ t_2^p \otimes t_3 \otimes t_1^{p^2}}_{5}
                                 -t_3^p \otimes t_2^p \otimes t_1^{p^2}
                                  - \underline{ t_4^p \otimes t_1^{p^2} \otimes t_1^{p^2}}_{4}
                                  + \underline{b_{2, 0} \otimes t_1^{p^2} }_{8}.\\
   d (t_2^p \otimes t_4^{p^2}) &=& -\underline{t_1^p \otimes t_1^{p^2} \otimes t_4^{p^2}}_{1}
                                 + \underline{t_2^p \otimes t_1^{p^2} \otimes t_3}_{3}
                                 + t_2^p \otimes t_2^{p^2} \otimes t_2^p
                                 + \underline{t_2^p \otimes t_3^{p^2} \otimes t_1^{p^2}}_{7}
                                 - \underline{t_2^p \otimes b_{1, 1}}_{9}. \\
  d (t_1^p \otimes t_1^{p^2}t_4^{p^2}) &=& \underline{ t_1^p \otimes t_4^{p^2} \otimes t_1^{p^2}}_{1}
                                     - \underline{ t_1^p \otimes b_{1, 1}(1 \otimes t_1^{p^2} + t_1^{p^2} \otimes 1)}_{2} + \underline{ t_1^p \otimes t_1^{p^2} \otimes t_4^{p^2}}_{1} + \cdots\\
                               -d (t_2^p \otimes t_1^{p^2}t_3) &=& -\underline{t_2^p \otimes t_1^{p^2} \otimes t_3 }_{3}
                                  - \underline{t_2^p \otimes t_3 \otimes t_1^{p^2} }_{5} + \cdots\\
-2d (t_2^p t_3\otimes t_1^{p^2}) &=& +2t_3 \otimes t_2^p\otimes t_1^{p^2}
                                   + 2\underline{t_2^p \otimes t_3 \otimes t_1^{p^2} }_{5} + \cdots\\
\frac{1}{2}d (t_4^p \otimes t_1^{2p^2}) & =& \frac{1}{2}\underline{ b_{1, 0} \otimes t_1^{2p^2} }_{6}
                                              + \underline{t_4^p \otimes t_1^{p^2} \otimes t_1^{p^2} }_{4} +\cdots\\
d (t_2t_3^{p^2} \otimes t_1^p) &=& -t_3^{p^2} \otimes t_2^p \otimes t_1^p
                                  -\underline{t_2^p \otimes t_3^{p^2} \otimes t_1^p }_{7} + \cdots
\end{eqnarray*}
Thus $\varphi (\gamma_s) = s(s^2-1) t_3 \otimes t_2^p \otimes t_1^{p^2} - s(s-1) \rho \otimes t_2^p \otimes t_1^{p^2} + s(s-1)t_2^p \otimes t_2^{p^2} \otimes t_2^p + \cdots = s(s^2-1) \nu_0 - s(s-1) \rho k_1$, for the monomials with same tabs will disappear.

\end{proof}

Now we can prove our main result.
\begin{proof}[Proof of Theorem \cref{nontrivprod}]
By Cohen \cite{Co}, $\zeta_n$ is represented by $\alpha_1 \beta_{p^n/p^n} + \alpha_1 x \in Ext_{BP_*BP}^{*,* }(BP_*, BP_*)$, which is the $ E_2$-term of the Adams-Novikov spectral sequence, where $ x = \sum\limits_{s,k, j } a_{s, k, j}\beta_{sp^k/j}$ and $a_{1, n, p^n} =0$. Comparing the inner degrees, we get
  $$ 2(p^2-1)sp^k - 2(p-1)j = 2(p^2-1)p^n - 2(p-1)p^n. $$
  That is, $ p^k(sp +s ) = p^{n+1} +j $. And by the theorem 2.6 of \cite{Mi},  $j \leq p^k + p^{k-1} -1$, we get $k \leq n$ and $ j=p^k $. Thus
  $$  x = \sum\limits_{n+1-k \, \\ \text{ odd}} a_{s, k, p^k} \beta_{sp^k/p^k} , $$
  where $s=\frac{p^{n-k +1} +1 }{p+1} >1$.

For $ (\alpha_1\beta_{p^n/p^n} +\alpha_1x) \cdot \gamma_s \in E_2^{6, *}$, by the Lemma \cref{Thom map} ,we have
\begin{eqnarray*}
  \varphi (\alpha_1(\beta_{p^n/p^n} +x) \cdot \gamma_s) &=&  -s(s^2-1) h_{1, 0}\cdot b_{1, \overline{n}}\cdot \nu_0 + s(s-1) h_{1, 0}\cdot b_{1, \overline{n}}\cdot \rho k_1\\
                                                     &=& -s(s^2-1) h_{1, 0}\cdot e_{4, \overline{n+1}}\cdot \nu_0 \\
                                                    &\equiv& \begin{cases}
\frac{ s(s^2-1)}{3}e_{4, 2}e_{4, 0}g_1 \neq 0& \text{if  $\overline{n} \equiv 0$, $s\not \equiv 0$, $\pm 1$  mod $(p) $},\\
   0 & \text{if $\overline{n} \equiv 1$ mod $(p)$},\\
 \frac{ s(s^2-1)}{3}e_{4, 0}e_{4, 1}g_2 \neq 0& \text{if $\overline{n} \equiv 2$, $s \not \equiv 0$, $\pm 1$  mod $(p) $},\\
\end{cases}
\end{eqnarray*}
here, $\overline{n}$ is mod (3) reduction of n.
Thus  $ (\alpha_1\beta_{p^n/p^n} + \alpha_1 x) \cdot \gamma_s \neq 0$ under the conditions in the theorem.

Notice that the inner degrees of elements  in $\operatorname{Ext}_{BP_*BP}^*(BP_*, BP_*)$  are divisible by $q$, where $q=2(p-1)$. This means that the first nontrivial differential may be $d_{q+1}$, so $\alpha_1(\beta_{p^n/p^n} +x) \cdot \gamma_s \in E_2^{6, *}$ may not be killed be any differentials, and we conclude.
\end{proof}
\bigskip

\appendix
\section{The list of product relations of any two generators of $H^*S(3)$}

\bigskip

Here we lose the products that equal to zero and the  proof which is trivial but tedious.  \\

dim3:
\begin{align*}
& e_{4, i} \cdot h_{1, i+2}=e_{4, i+2}h_{1, i}, &  &k_i \cdot h_{1, i} = -g_i h_{1, i+1},\\
\end{align*}

dim4:
\begin{align*}
   &e_{4, i} \cdot k_{i+1}= e_{4, i+1}g_{i+2},   &&\mu_i \cdot h_{1, i+2}= -\frac{1}{3}e_{4, i+2}g_{i},\\
    & \mu_i \cdot h_{1, i+1} = \frac{1}{3} e_{4, i +1}g_i - \frac{2}{3}e_{4, i}k_i + \frac{1}{3}\rho g_i h_{1, i+1},&&\nu_i \cdot h_{1, i} =\frac{1}{3}e_{4, i+1}g_{i+2}, \\
   &  \nu_i \cdot h_{1, i+1} =  \frac{2}{3}e_{4, i +2}g_{i+1} - \frac{1}{3} e_{4, i+1}k_{i+1} - \frac{1}{3} \rho g_{i+1} h_{1, i+2}, & &\xi  \cdot h_{1, i} = -e_{4, i+2}g_{i},\\
 \end{align*}
 dim5:

   \begin{align*}
  & e_{4, i}e_{4, i+1} \cdot h_{1, i} =  e_{4, i}^2h_{1, i+1},   &&e_{4, i}e_{4, i+1} \cdot h_{1, i+1} = e_{4, i+1}^2h_{1, i},\\
  &  \theta_i \cdot  h_{1, i+2} = -\frac{1}{2} e_{4, i}^2h_{1, i+2},        &&e_{4, i}h_{1, i} \cdot e_{4, i+1} = e_{4, i}^2h_{1, i+1}, \\
  & e_{4, i}h_{1, i} \cdot e_{4, i+2} = e_{4, i}^2h_{1, i+2},     &&e_{4, i}h_{1, i+1} \cdot e_{4, i+1}= e_{4, i+1}^2h_{1, i},\\
  & e_{4, i}h_{1, i+1} \cdot e_{4, i+2}= e_{4, i+2}e_{4, i}h_{1, i+1}, &&e_{4, i} \cdot \mu_{i+1} =  \frac{2}{3}\rho e_{4, i}g_{i+1} - e_{4, i+2}v_{i+2},\\
  &   \mu_i  \cdot g_{i+1}  = \frac{1}{2}e_{4, i+1}^2h_{1, i},           && \mu_i  \cdot g_{i+2} = -\frac{1}{2} e_{4, i}^2h_{1, i+2},\\
  &  \mu_i  \cdot k_{i+1}  = \frac{1}{6}e_{4, i}e_{4, +1}h_{1, i+2}, &&\nu_i  \cdot e_{4, i+1}  = -e_{4, i+2} \mu_{i+1} + \frac{1}{3} \rho e_{4, i+2}g_{i+1} + \frac{1}{3}\rho e_{4, i+1}k_{i+1},  \\
   &\nu_i  \cdot g_i =  \frac{1}{6}e_{4, i}e_{4, +1}h_{1, i+2},  &&\nu_i  \cdot k_i= \frac{1}{2}e_{4, i+1}^2h_{1, i+2},\\
  &\nu_ik_{i+2} = -\frac{1}{2}e_{4, i+2}^2h_{1, i},            && \xi   \cdot e_{4, i} = \rho e_{4, i+2}g_i-3e_{4, i+1}v_{i+1},\\
  &\xi  \cdot g_i= -\frac{1}{2}e_{4, i}^2h_{1, i+1},                       && \xi \cdot k_i = \frac{1}{2}e_{4, i+1}^2h_{1, i},\\
 \end{align*}
 dim6:
 \begin{align*}
 &  e_{4, i}h_{1, i} \cdot \mu_{i+1} =\frac{1}{3}e_{4, i+1}e_{4, i+2}g_i, && e_{4, i}h_{1, i} \cdot \nu_i = -\frac{1}{3}e_{4, i}e_{4, i+1}g_{i+2},\\
 &e_{4, i}h_{1, i+1} \cdot \mu_{i+2} = \frac{1}{3}e_{4, i+1}e_{4, i}g_{i+2} , &&e_{4, i}h_{1, i+1} \cdot \nu_i  = -\frac{1}{3}  e_{4, i +2}e_{4, i}g_{i+1},\\
 & \mu_i \cdot \mu_{i+1} =  -\frac{1}{3}\rho e_{4, i+1}^2h_{1, i} -\frac{1}{6} e_{4, i+1}^2 e_{4, i}, &&  \mu_i \cdot  \xi = \frac{1}{6}\rho e_{4, i}^2h_{1, i+1} + \frac{1}{6} e_{4, i}^2e_{4, i+1},\\
  & \nu_i \cdot \nu_{i+1} =\frac{1}{3} \rho e_{4, i+2}^2h_{1, i} - \frac{1}{6} e_{4, i+2}^2e_{4, i}, && \nu_i \cdot \xi  = \frac{1}{6}e_{4, i+2}^2 e_{4, i+1}-\frac{1}{6}\rho e_{4, i+2}^2h_{1, i+1},\\
    & e_{4, i}k_i \cdot e_{4, i+2} =  e_{4, i+1}e_{4, i+2}g_i,  &&e_{4, i}^2 \cdot g_{i+1} = e_{4, i+1}e_{4, i+2}g_i, \\
 & e_{4, i}^2 \cdot k_{i+1} = e_{4, i}e_{4, i+1}g_{i+2}, &&e_{4, i}e_{4, i+1} \cdot k_{i+1}  = e_{4, i+2}e_{4, i}g_{i+1},\\
 & e_{4, i}g_{i+1} \cdot e_{4, i} =  e_{4, i+1}e_{4, i+2}g_i,  && \theta_i \cdot e_{4, i +1} = \frac{1}{3}\rho e_{4, i}^2h_{1, i+1} + \frac{1}{3}e_{4, i}^2e_{4, i+1},\\
 &\theta_i \cdot e_{4, i+2} = -\frac{1}{3}\rho e_{4, i}^2h_{1, i+2} -\frac{1}{6} e_{4, i}^2 e_{4, i+2}, &&\theta_i \cdot g_{i+1} =  -\frac{1}{6}e_{4, i+1}e_{4, i+2}g_i,\\
 &\theta_i \cdot k_{i+1} = \frac{1}{3}e_{4, i}e_{4, i+1}g_{i+2}, &&e_{4, i} \mu_{i+2} \cdot h_{1, i+1} =-\frac{1}{3}e_{4, i}e_{4, i+1}g_{i+2},\\
 &e_{4, i} \nu_i \cdot h_{1, i} = \frac{1}{3}e_{4, i} e_{4, i+1}g_{i+2}, &&e_{4, i} \nu_i \cdot h_{1, i+1} =  \frac{1}{3} e_{4, i +2} e_{4, i}g_{i+1},\\
  & \eta_i \cdot h_{1, i+1} = \frac{1}{6}\rho e_{4, i}^2h_{1, i+1} +\frac{1}{6} e_{4, i}^2e_{4, i+1}, &&\eta_i \cdot h_{1, i+2} =\frac{1}{6} \rho e_{4, i}^2h_{1, i+2} - \frac{1}{6}e_{4, i}^2 e_{4, i+2},\\
  \end{align*}
 dim7:
 \begin{align*}
 & e_{4, i}^2 \cdot  \mu_{i+1} = e_{4, i+1}e_{4, i+2} \mu_{i},  && e_{4, i}^2 \cdot  \nu_i = \frac{2}{3}\rho e_{4, i}e_{4, i+1}g_{i+2} - e_{4, i}e_{4, i+1}\mu_{i+2},\\
 & e_{4, i} e_{4, i+1} \cdot  \nu_i =-e_{4, i}e_{4, i+1} \mu_{i+2} +  \frac{2}{3} \rho e_{4, i+2}e_{4, i} g_{i+1},  &&e_{4, i} e_{4, i+1} \cdot  \xi   = -\rho e_{4, i+1}e_{4, i+2}g_{i} + 3e_{4, i+1}e_{4, i+2}\mu_{i},\\
 & \theta_i \cdot \mu_{i+1} = \frac{1}{2} e_{4, i+1}e_{4, i+2} \mu_i, &&e_{4, i} \nu_i \cdot e_{4, i} =\frac{2}{3}\rho e_{4, i}e_{4, i+1}g_{i+2}- e_{4, i}e_{4, i+1}\mu_{i+2},\\
 & e_{4, i} \nu_i \cdot e_{4, i+1} = -e_{4, i+2}  e_{4, i} \mu_{i+1} +\frac{2}{3}\rho e_{4, i+2}  e_{4, i}g_{i+2},  && \eta_i \cdot e_{4, i+1} =\frac{1}{6} \rho e_{4, i}^2 e_{4, i+1},\\
 & \eta_i \cdot e_{4, i+2} = \frac{1}{6} \rho e_{4, i+1}^2 e_{4, i},   && \eta_i \cdot g_{i+1}=  \frac{1}{2} e_{4, i+1}e_{4, i+2} \mu_i ,\\
 &\eta_i \cdot k_{i+1} =  - \frac{1}{2} e_{4, i}e_{4, i+1}\mu_{i+2} +\frac{1}{3} \rho e_{4, i}e_{4, i+1}g_{i+2},\\
 \end{align*}
 dim8:
 \begin{align*}
 &  e_{4, i}^2 \cdot e_{4, i+1}k_{i+1} = e_{4, i}^2 e_{4, i+2}g_{i+1},        &&e_{4, i}^2h_{1, i+1} \cdot  \nu_i = -\frac{1}{3}e_{4, i}^2e_{4, i+2}g_{i+1},\\
  & e_{4, i}e_{4, i+1}h_{1, i+2} \cdot  \xi = e_{4, i+1}^2e_{4, i}g_{i+2}, &&e_{4, i} \mu_{i+2} \cdot e_{4, i+1}h_{1, i+1} =-\frac{1}{3} e_{4, i}^2 e_{4, i+2}g_{i+1},\\
  \end{align*}
   \begin{align*}
  &e_{4, i}  \nu_i \cdot e_{4, i}h_{1, i+1}=\frac{1}{3}e_{4, i}^2e_{4, i+2}g_{i+1},  &&\eta_i \cdot g_{i+1}h_{1,i+2} =-\frac{1}{6}e_{4, i}^2e_{4, i+2}g_{i+1},\\
 &e_{4, i}^2e_{4, i+1} \cdot   k_{i+1} =e_{4, i}^2e_{4, i+2}g_{i+1},  &&e_{4, i}e_{4, i+1} \mu_{i+2} \cdot h_{1, i+1} =-\frac{1}{3}e_{4, i+1}^2e_{4, i}g_{i+2},
  \end{align*}
 dim9:
 \begin{align*}
 &e_{4, i} \mu_{i+2} \cdot e_{4, i+1}^2 =\frac{1}{3} \rho e_{4, i}^2e_{4, i+2}g_{i+1}, &&e_{4, i} \mu_{i+2} \cdot \theta_{i+1}  =\frac{1}{6} \rho e_{4, i}^2e_{4, i+2}g_{i+1},\\
 &e_{4, i}  \nu_i \cdot e_{4, i}e_{4, i+1} =\frac{1}{3}  \rho e_{4, i+2}e_{4, i}^2g_{i+1}, &&\eta_i \cdot e_{4, i+2}g_{i+1} = \frac{1}{6}\rho e_{4, i}^2e_{4, i+2}g_{i+1},\\
 & \eta_i \cdot e_{4, i+1}k_{i+1} = \frac{1}{6} \rho e_{4, i}^2e_{4, i+2}g_{i+1},  &&e_{4, i}^2e_{4, i+1} \cdot  \nu_i =\frac{1}{6}\rho e_{4, i}^2 e_{4, i+2}g_{i+1},\\
 &e_{4, i}^2e_{4, i+2} \cdot \mu_{i+1} =\frac{1}{3} \rho e_{4, i}^2 e_{4, i+2}g_{i+1}, && e_{4, i}e_{4, i+1} \mu_{i+2} \cdot e_{4, i+1} = \frac{1}{3}\rho e_{4, i}^2e_{4, i+2}g_{i+1}.\\
 \end{align*}

\begin{remark}
The multiplications in \cite{Ya}  are corresponded with above, except  $a_0g'_0=h_0b'_0-h_1b_0$. From our calculations, it should be  $a_0g'_0=h_0b'_0-2h_1b_0$.
\end{remark}

\bigskip

\begin{center}
 \vskip 20pt
{\bf Gu Xing}\\
{\it The Max Planck Institute for Mathematics, Vivatsgasse 7, 53111, Bonn, Germany.}\\
{\tt gux2006@mpim-bonn.mpg.de}\\  \vskip 10pt
{\bf Wang Xiangjun}\\
{\it School of Mathematics, Nankai University,
Tianjin 300071, P.R. China}\\
{\tt xjwang@nankai.edu.cn}\\  \vskip 10pt
{\bf Wu Jianqiu}\\
{\it School of Mathematics, Nankai University,
Tianjin 300071, P.R. China}\\
{\tt wujianqiu@mail.nankai.edu.cn}\\
\end{center}


\begin{thebibliography}{10}


\bibitem{Co}
 R. Cohen: \emph{Odd primary families in stable homotopy theory}, Mem. Amer.Math.Soc. {\bf 242} (1981).

\bibitem{Ha}
 M. Hazewinkel: \emph{A universal formal group law and complex cobordism}, Bull. A.M.S. {\bf 81} (1975), 930-933.

\bibitem{Ha1}
 M. Hazewinkel: \emph{Construction formal groups $\uppercase\expandafter{\romannumeral1}$. Over $\mathbb{Z}_{(p)}$-algebra}, report of Econometric Institute $\sharp$ 7119, Netherlands School of Economics, 1971.

\bibitem{Ha2}
 M. Hazewinkel: \emph{A universal formal group law and complex cobordism}, Bull. A.M.S. {\bf 81} (1975), 930-933.

\bibitem{Ka}
R. Kato and K. Shimomura: \emph{Products of Greek letter elements dug up from the third Morava stablizer algebra}, Algebr. Geom. topol. {\bf 12} (2012), 951-961.

\bibitem{Lee}
C. Lee: \emph{Detection of some elements in the stable homotopy groups of spheres}, Math. Z. {\bf 222} (1996), 231-246.



\bibitem{Li}
X. Liu and Wang: \emph{ A four-filtered May spectral sequence and its applications}, Axta. math. Sin{}English  {\bf 24} (2008), 1507-1524.

\bibitem{Liu}
A. Liulevicius: \emph{The factorization of cyclic reduced powers by secondary cohomology operations}, Mem.Amer.Math.Soc. {\bf 42} (1962).

\bibitem{Ma}
 M. Mahowald: \emph{A new infinite family in ${}_2 \pi_*^s$}, Topology {\bf 16} (1977), 249-256.

\bibitem{Mi}
H. Miller, D. C. Ravenel and S. Wilson: \emph{Periodic phenomena in the Adams-Novikov spectral sequence}, Ann. of math. {\bf 106} (1977), 469-516.

\bibitem{Mi2} H. Miller and D. C. Ravenel: \emph{Morava stabilizer algebras and the localization of Novikov's $E_2$-term}, Duke J.Math.  {\bf 44} (1977), 433-447.

\bibitem{Ra}
D. C. Ravenel: Complex Cobordism and Stable Homotopy Groups of Spheres, Academic Press, New York, 1986.

\bibitem{Ra1}
D. C. Ravenel: \emph{The Cohomology of the Morava Stabilizer Algebras}, Math. Z. {\bf 152} (1977), 287-297.

\bibitem{Sm}
L. Smith: \emph{On realizing complex cobordism modules}, Amer. J. Math. {\bf 92} (1970), 793-856.

\bibitem{To}
H. Toda: \emph{On spectra realizing exterior parts of steenord algebra}, Topology {\bf 10} (1971), 53-65.


\bibitem{Wa}
X. J. Wang and Q. B. Zheng:  \emph{The convergence of $\widetilde{\alpha}_s^{(n)} h_0 h_k$},  Sci. China Ser. A {\bf 41} (1998), 622-628.

\bibitem{Ya}
 A. Yamaguchi: \emph{The structure of the cohomology of Morava stabilizer algebra  $S(3)$}, Osaka J. Math. {\bf 29} (1992), 347-359.




\end{thebibliography}
\end{document}